\documentclass[11pt, reqno]{amsart}
\usepackage{amsmath, amsthm, amscd, amsfonts, amssymb, graphicx, color}
\usepackage[bookmarksnumbered, colorlinks, plainpages]{hyperref}
\hypersetup{colorlinks=true,linkcolor=red, anchorcolor=green, citecolor=cyan, urlcolor=red, filecolor=magenta, pdftoolbar=true}
\textwidth=13cm

\setlength{\oddsidemargin}{0.35in}\setlength{\evensidemargin}{0.35in}

\setlength{\topmargin}{-.5cm}

\newtheorem{theorem}{Theorem}[section]

\newtheorem{proposition}[theorem]{Proposition}
\newtheorem{corollary}[theorem]{Corollary}
\theoremstyle{definition}
\newtheorem{definition}[theorem]{Definition}
\newtheorem{example}[theorem]{Example}

\theoremstyle{remark}
\newtheorem{remark}[theorem]{Remark}
\numberwithin{equation}{section}

\title[Hypercyclic operators on Hilbert C*-modules]
{Hypercyclic operators on Hilbert C*-modules}

\author[S. Ivkovi\'c]{Stefan Ivkovi\'c}
\address{Mathematical Institute of the Serbian Academy of Sciences and Arts,
	p.p. 367, Kneza Mihaila 36, 11000 Beograd, Serbia}
\email{stefan.iv10@outlook.com}

\subjclass[2010]{47A16}

\keywords{standard Hilbert modules, generalized shifts, hypercyclic sequnces of operatrs}

\date{\today}
\begin{document}

\maketitle

\begin{abstract}
	In this paper we characterize hypercyclic generalized bilateral weighted shift operators on the standard Hilbert module over the $C^{*}$-algebra of compact operators on the separable Hilbert space. Moreover, we give necessary and sufficient conditions for these operators to be chaotic and we provide concrete examples.
\end{abstract}

\baselineskip17pt

\section{Introduction}
 In last decades, linear dynamical properties of operators  have been studied in many research articles; see monographs \cite{bmbook,gpbook}, and recent papers \cite{IT,tsi}. Among other concepts, hypercyclicity, topological transitivity and topological mixing, as important linear dynamical properties of bounded linear operators, have been investigated in many research works; see \cite{an97, bg99, kuchen17, shaw, kostic} and their references.
 %--------------------------------------------------------  
 Specially, hypercyclic weighted shifts on $\ell^p(\Bbb{Z})$ were characterized in \cite{sa95,ge00}, and then  C-C. Chen and 
 C-H. Chu, using aperiodic elements of locally compact groups, extended the results in \cite{sa95} to weighted translations on Lebesgue spaces in the context of a second countable group \cite{CC}. Afterwards, many other linear dynamics in connection with this theme have been studied; see \cite{chen11, chen141, chta1, chta2, chta3}.

Recently, in \cite{si09} we have for instance characterized hypercyclic weighted composition operators on the commutative $C^{*}$-algebra of continuous functions vanishing at infinity on a locally compact, non-compact Hausdorff space. Moreover, in \cite{IT} we have characterized hypercyclic elementary operators on the space of compact linear operators on a separable Hilbert space. The dynamics of some similar operators have been considered earlier such as conjugate operators, see \cite{pet}, and left multiplication operators, see \cite{ZZD,ZLD,YRH}.

In this paper, in Section \ref{Sec 4} we provide an algebraic generalization of our previous results given in \cite{si09,IT} to arbitrary non-unital $C^{*}$-algebras.

However, the main aim of this paper is to study the dynamics of  generalized bilateral weighted shift operators on the standard Hilbert $C^{*}$-module over the $C^{*}$-algebra of compact operators on a separable  Hilbert space, thus to generalize in this setting the results from \cite{sa95,ge00}. In Section \ref{Section 3} we characterize hypercyclic such operators and we also give necessary and sufficient conditions for these operators to be chaotic. Moreover, we provide concrete examples.

\section{Preliminaries}

If $\mathcal X$ is a Banach space, the set of all bounded linear operators from $\mathcal X$ into $\mathcal X$ is denoted by $B(\mathcal X)$. Also, we denote $\mathbb{N}_0:=\mathbb{N}\cup\{0\}$.

%===================================================================
\begin{definition}\cite[Definition 2.1]{tsi}
	Let $\mathcal X$ be a Banach space. A sequence $(T_n)_{n\in \mathbb{N}_0}$  of operators in $B(\mathcal X)$ is called {\it topologically transitive} if for each non-empty open subsets $U,V$ of
	${\mathcal X}$, $T_n(U)\cap V\neq \varnothing$ for some $n\in \mathbb{N}$. If $T_n(U)\cap V\neq \varnothing$ holds from some $n$ onwards, then
	$(T_n)_{n\in \mathbb{N}_0}$ is called {\it topologically mixing}. 
	%An operator $T\in B(\mathcal X)$ is called {\it topologically transitive} ({\it topologically mixing}, respectively) if the sequence $(T^n)_{n\in \mathbb{N}_0}$ is  topologically transitive (topologically mixing, respectively), where $T^n$ is $n$-times combination of $T$ by itself.
\end{definition}
%==================================================================
\begin{definition}\cite[Definition 2.2]{tsi}
	Let $\mathcal X$ be a Banach space. A sequence $(T_n)_{n\in \mathbb{N}_0}$  of operators in $B(\mathcal X)$ is called {\it hypercyclic} if there is an element $x\in\mathcal X$ (called \emph{hypercyclic vector}) such that the orbit $\{T_nx:\,n\in\mathbb N_0\}$ is dense in $\mathcal X$. The set of all hypercyclic vectors of a sequence $(T_n)_{n\in \mathbb{N}_0}$ is denoted by $HC((T_n)_{n\in \mathbb{N}_0})$. If $HC((T_n)_{n\in \mathbb{N}_0})$ is dense in $\mathcal X$, the sequence $(T_n)_{n\in \mathbb{N}_0}$ is called \emph{densely hypercyclic}. An operator $T\in B(\mathcal X)$ is called \emph{hypercyclic} if the sequence $(T^n)_{n\in \mathbb N_0}$ is hypercyclic.
\end{definition}

Note that a sequence $(T_n)_{n\in \Bbb{N}_0}$  of operators in $B(\mathcal X)$ is topologically transitive if and only if it is densely hypercyclic \cite{gro}. Also, a Banach space admits a hypercyclic operator if and only if it is separable and infinite-dimensional \cite{an97,bg99}.

\begin{definition} \cite[Definition 2.3]{tsi}
	Let $\mathcal X$ be a Banach space, and $(T_n)_{n\in \Bbb{N}_0}$  be a sequence of operators in $B(\mathcal X)$. A vector $x\in \mathcal X$ is called a {\it periodic element} of $(T_n)_{n\in \Bbb{N}_0}$ if there exists a constant $N\in\mathbb N$ such that for each $k\in\mathbb N$, $T_{kN}x=x$. The set of all periodic elements of $(T_n)_{n\in \Bbb{N}_0}$ is denoted by
	${\mathcal P}((T_n)_{n\in \Bbb{N}_0})$. The sequence $(T_n)_{n\in \Bbb{N}_0}$ is called {\it chaotic} if $(T_n)_{n\in \Bbb{N}_0}$ is topologically transitive and ${\mathcal P}((T_n)_{n\in \Bbb{N}_0})$ is dense in ${\mathcal X}$. An operator $T\in B(\mathcal{X})$ is called \emph{chaotic} if the sequence $\{T^n\}_{n\in \Bbb{N}_0}$ is chaotic. 
\end{definition}

\section{Generalized weighted bilateral shift operators over $C^{*}$-algebras}\label{Section 3}

Let $H$ be a separable Hilbert space. The $C^{*}$-algebra of all bounded linear operators on $ H$ is denoted by $B( H)$ whereas we let $\mathcal{A}:=B_0( H)$ be the $C^{*}$-algebra of all compact operators on $ H$. For every $T,S\in B( H)$ we denote $T\leq S$ whenever $\langle (T-S)h,h\rangle\geq 0$ for all $h\in H$. Assume that $\{e_j\}_{j\in\mathbb{Z}}$ is an orthonormal basis for $ H$, and for each $m\in\mathbb{N}$, $P_m$ is the orthogonal projection onto 
${\rm Span}\{e_{-m},\ldots,e_{m}\}$.
 Let $W:=\{W_j\}_{j\in\mathbb{Z}}$ be a uniformly bounded sequence of invertible operators in $B( H)$ such that the sequence $\{W_j^{-1}\}_{j\in\mathbb{Z}}$ is also uniformly bounded in $B( H)$. Moreover, let $U$ be a unitary operator on $ H$. We define $T_{U,W}$ to be the operator on $\ell_2(\mathcal{A})$, the standard right Hilbert module over $\mathcal A$, given by 
$$(T_{U,W}(x))_\xi:=W_\xi\,x_{\xi-1}\,U$$
for all $\xi\in\mathbb{Z}$ and $x:=(x_j)_{j\in\mathbb{Z}}\in \ell_2(\mathcal{A})$. It is easy to see that $T_{U,W}$ is a linear operator. Put $M:=\sup_{j\in\mathbb Z}\|W_j\|$. Then, since for all $j\in\mathbb{Z}$,\, 
$M^2U^*x_{j-1}^*x_{j-1}U-U^*x_{j-1}^*W_j^*W_jx_{j-1}U$ is a positive semidefinite operator on $ H,$ we have 
\begin{align*}
\sum_{j\in\mathbb{Z}}U^*x_{j-1}^*W_j^*W_jx_{j-1}U&\leq M^2 \sum_{j\in\mathbb{Z}}U^*x_{j-1}^*x_{j-1}U\\
&=M^2\,U^*(\sum_{j\in\mathbb{Z}}x_{j-1}^*x_{j-1})U\\
&=M^2\,U^*\,\langle x,x\rangle\, U,
\end{align*}
so ${\rm Im} T_{U,W}\subseteq \ell_2(\mathcal{A})$ and $\|T_{U,W}\|\leq M$. Moreover, $T_{U,W}$ is invertible and its inverse $S_{U,W}$ is given by 
$$(S_{U,W}(y))_\xi:=W_{\xi+1}^{-1}y_{\xi+1}U^*$$
for all $y:=(y_j)_j\in\ell_2(\mathcal{A})$ and $\xi\in\mathbb{Z}$. By some calculations we can see that $$(T_{U,W}^n(x))_\xi=W_\xi W_{\xi-1}\ldots W_{\xi-n+1}x_{\xi-n} U^n$$
and 
$$(S_{U,W}^n(y))_\xi:=W_{\xi+1}^{-1}W_{\xi+2}^{-1}\ldots W_{\xi+n}^{-1}y_{\xi+n}U^{*n}$$
for all $n\in\mathbb{N}$, $\xi\in\mathbb{Z}$ and $x:=(x_j)_j,y:=(y_j)_j$ in $\ell_2(\mathcal{A})$. 

For each $J\in\mathbb N$, we denote $[J]:=\{-J,-J+1,\ldots,J-1,J\}$. In the following result, we give some equivalent condition for a sequence of powers of an operator $T_{U,W}$ to be densely hypercyclic on $\ell_2(\mathcal{A})$.
%==================================================
\begin{proposition} \label{Rni3 p2.1}
	Let $(t_n)_n$ be an unbounded sequence of nonnegative integers. We denote $T_{U,W,n}:=T_{U,W}^{t_n}$ for all $n\in\mathbb{N}$. Then, the followings are equivalent:
	\begin{enumerate}
		\item $(T_{U,W,n})_n$ is a densely hypercyclic sequence on $\ell_2(\mathcal{A})$.
		\item For every $J,m\in\mathbb{N}$ there exist a strictly increasing sequence $\{n_k\}_k\subseteq\mathbb{N}$ and  sequences $\{D_{i}^{(k)}\}_k$ and $\{G_i^{(k)}\}_k$ for all $i\in [J]$ of operators in $B_0( H)$ such that 
	$$\lim_{k \rightarrow \infty}\|D_j^{(k)}-P_m\|=\lim_{k \rightarrow \infty} \|G_j^{(k)}-P_m\|=0$$
	and
	$$\lim_{k \rightarrow \infty}\|W_{j+t_{n_k}}W_{j+t_{n_k}-1}\ldots W_{j+1}D_j^{(k)}\|$$
	$$= \lim_{k \rightarrow \infty}\|W_{j-t_{n_k}+1}^{-1}W_{j-t_{n_k}+2}^{-1}\ldots W_j^{-1}G_j^{(k)}\|=0$$
	for all $j\in[J]$.
	\end{enumerate}
\end{proposition}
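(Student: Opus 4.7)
The plan is to view this as a hypercyclicity-criterion characterisation adapted to the Hilbert $C^{*}$-module setting, with the single strictly increasing sequence $(n_k)$ serving as a common witness for both directions of contraction.

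For $(2)\Rightarrow(1)$ I will first repackage the data supplied by $(2)$ as genuine elements of $\ell_{2}(\mathcal{A})$. Given $J,m\in\mathbb{N}$, let $\widetilde{D}^{(k)}$ (resp.\ $\widetilde{G}^{(k)}$) denote the finitely supported vector whose $j$-th coordinate is $D_{j}^{(k)}$ (resp.\ $G_{j}^{(k)}$) for $j\in[J]$ and zero otherwise, and write $e_{J,m}$ for the analogous vector with $P_{m}$ in each slot $j\in[J]$. Using the inequality $\|\sum_{\xi}a_{\xi}^{\ast}a_{\xi}\|\leq\sum_{\xi}\|a_{\xi}\|^{2}$ for finitely many distinct indices $\xi$, together with the explicit formulas for $T_{U,W}^{t_{n_k}}$ and $S_{U,W}^{t_{n_k}}$ recalled in the preamble, condition $(2)$ translates into the four convergences $\widetilde{D}^{(k)}\to e_{J,m}$, $T_{U,W}^{t_{n_k}}\widetilde{D}^{(k)}\to 0$, $\widetilde{G}^{(k)}\to e_{J,m}$, $S_{U,W}^{t_{n_k}}\widetilde{G}^{(k)}\to 0$. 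To upgrade these to topological transitivity on a dense subset, take $X_{0}\subseteq\ell_{2}(\mathcal{A})$ to consist of the finitely supported vectors whose entries have the form $P_{m}A_{j}P_{m}$; this is dense because $P_{m}KP_{m}\to K$ for every compact $K$. For $u,v\in X_{0}$, after enlarging $J,m$ so that both are captured, the twisted vectors $\widetilde{D}_{u}^{(k)}:=\sum_{j}D_{j}^{(k)}A_{j}^{u}P_{m}\,\delta_{j}$ and $\widetilde{G}_{v}^{(k)}:=\sum_{j}G_{j}^{(k)}A_{j}^{v}P_{m}\,\delta_{j}$ inherit the same four convergences by submultiplicativity. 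The B\`es--Peris test vector $z^{(k)}:=\widetilde{D}_{u}^{(k)}+S_{U,W}^{t_{n_k}}\widetilde{G}_{v}^{(k)}$ then satisfies $z^{(k)}\to u$ and $T_{U,W}^{t_{n_k}}z^{(k)}\to v$, yielding topological transitivity and hence dense hypercyclicity by \cite{gro}.

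For $(1)\Rightarrow(2)$ I will fix $J,m$, set $e:=e_{J,m}$, and use density of $HC((T_{U,W,n})_{n})$ to pick, for each $k$, a hypercyclic vector $z^{(k)}$ with $\|z^{(k)}-e\|<1/k$. Since its orbit is dense, the set $\{n:\|T_{U,W}^{t_n}z^{(k)}\|<1/k\}$ is infinite, so I can extract $n_{k}>n_{k-1}$ with $\|T_{U,W}^{t_{n_k}}z^{(k)}\|<1/k$. Setting $D_{j}^{(k)}:=(z^{(k)})_{j}$ and applying the coordinate bound $\|x_{j}\|_{\mathcal{A}}\leq\|x\|_{\ell_{2}(\mathcal{A})}$ together with the explicit formula for $T_{U,W}^{t_{n_k}}$ yields $D_{j}^{(k)}\to P_{m}$ and $\|W_{j+t_{n_k}}\cdots W_{j+1}D_{j}^{(k)}\|=\|(T_{U,W}^{t_{n_k}}z^{(k)})_{j+t_{n_k}}\|<1/k$, which is the $D$-part of $(2)$.

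The main obstacle is producing $G_{j}^{(k)}$ along the \emph{same} subsequence $(n_k)$ already fixed by the $D$-construction: equivalently, I need, for this specific $n_k$, a small vector $y^{(k)}$ with $T_{U,W}^{t_{n_k}}y^{(k)}$ close to $e$, while naive topological transitivity applied to the pair $(B(0,1/k),B(e,1/k))$ only supplies such $y^{(k)}$ at an \emph{a priori} different time. To synchronise I will invoke the weighted-shift structure together with invertibility of $T_{U,W}$, in the spirit of Salas's argument in \cite{sa95}: exploiting the dense orbit of a single hypercyclic vector against a richer target (a linear combination of $e$ with a translate of itself) extracts, along one subsequence, both the backward-weight growth feeding $D$ and the forward-weight growth feeding $G$. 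Once such a common $n_{k}$ is secured, setting $G_{j}^{(k)}:=(T_{U,W}^{t_{n_k}}y^{(k)})_{j}$ and using the identity $W_{j-t_{n_k}+1}^{-1}\cdots W_{j}^{-1}G_{j}^{(k)}=y^{(k)}_{j-t_{n_k}}U^{t_{n_k}}$ with $\|y^{(k)}_{j-t_{n_k}}\|\leq\|y^{(k)}\|$ completes the verification.
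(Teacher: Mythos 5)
Your $(2)\Rightarrow(1)$ argument is essentially the paper's: twist the $D_j^{(k)}$'s and $G_j^{(k)}$'s by the coordinates of two finitely supported vectors from the dense set and test transitivity on $\eta_k=u_k+S_{U,W}^{t_{n_k}}v_k$; this direction is fine.

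The gap is in $(1)\Rightarrow(2)$, precisely at the point you yourself flag as ``the main obstacle'': you never actually produce the $G_j^{(k)}$ along the same subsequence. Your construction first fixes $n_k$ by demanding $\|T_{U,W}^{t_{n_k}}z^{(k)}\|<1/k$ for a hypercyclic $z^{(k)}$ near $e_{J,m}$, and then appeals to ``the spirit of Salas's argument'' with an unspecified ``richer target''; as written this is a placeholder rather than a proof, and your initial choice of $n_k$ (dictated solely by smallness of the forward orbit) leaves no leverage to also control $W_{j-t_{n_k}+1}^{-1}\cdots W_j^{-1}$ at that same time. The paper's resolution is a single move that makes the synchronization automatic: apply topological transitivity (equivalent to dense hypercyclicity) to the pair $U=V=B(x,1/4^k)$, where $x$ is the vector with $x_j=P_m$ for $j\in[J]$ and $x_j=0$ otherwise. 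One approximant $y^{(k)}$ with $\|y^{(k)}-x\|_2<1/4^k$ and $\|T_{U,W}^{t_{n_k}}y^{(k)}-x\|_2<1/4^k$ (and $t_{n_k}>2J$) then yields all four limits simultaneously: the coordinates of $y^{(k)}$ on $[J]$ give $D_j^{(k)}:=y_j^{(k)}\to P_m$; the coordinates of $T_{U,W}^{t_{n_k}}y^{(k)}$ at $j+t_{n_k}\notin[J]$, where $x$ vanishes, give $\|W_{j+t_{n_k}}\cdots W_{j+1}D_j^{(k)}\|\to 0$; the coordinates of $T_{U,W}^{t_{n_k}}y^{(k)}$ at $j\in[J]$ give $G_j^{(k)}:=W_j\cdots W_{j-t_{n_k}+1}y^{(k)}_{j-t_{n_k}}U^{t_{n_k}}\to P_m$; and the coordinates of $y^{(k)}$ at $j-t_{n_k}\notin[J]$ give $\|W_{j-t_{n_k}+1}^{-1}\cdots W_j^{-1}G_j^{(k)}\|=\|y^{(k)}_{j-t_{n_k}}\|\to 0$. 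Until you either carry out your Salas-style synchronization in detail or switch to this one-shot choice of open sets, the forward implication is not established.
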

%=====================================================
\begin{proof}
$(1)\Rightarrow(2)$:  Let $(T_{U,W,n})_n$ be densely hypercyclic. Assume that $J,m\in\mathbb{N}$, and define $x=(x_j)_j\in \ell_2(\mathcal{A})$ by $x_j:=P_m$ for all $j\in [J]$, and $x_j:=0$ for all $j\notin[J]$. Then, for each $k\in\mathbb{N}$, there exist an element $y^{(k)}\in \ell_2(\mathcal{A})$ and a term $t_{n_k}$ such that $\|y^{(k)}-x\|_2<\frac{1}{4^k}$ and 
	$\|T_{U,W,n_k}(y^{(k)})-x\|_2<\frac{1}{4^k}$. We can assume that the sequence $(n_k)_k$ is strictly increasing, and  $2J<t_{n_1}<t_{n_2}<\ldots$. Hence, 
	$$\|W_jW_{j-1}\ldots W_{j-t_{n_k}+1} y_{j-t_{n_k}}U^{t_{n_k}}-P_m\|<\frac{1}{4^k}$$
	for all $j\in[J]$. However, since $t_{n_k}>2J$ and $\|y^{(k)}-x\|_2<\frac{1}{4^k}$, we have $\|y^{(k)}_{j-t_{n_k}}\|<\frac{1}{4^k}$ as $x_{j-t_{n_k}}=0$ for all $j\in[J]$. Thus 
	$$\|W_{j-t_{n_k}+1}^{-1}\ldots W_j^{-1}W_j\ldots W_{j-t_{n_k}+1} y^{(k)}_{j-t_{n_k}}U^{t_{n_k}}\|=\|y^{(k)}_{j-t_{n_k}}U^{t_{n_k}}\|=\|y^{(k)}_{j-t_{n_k}}\|<\frac{1}{4^k}$$
	for all $j\in[J]$. Similarly, since $\|T_{U,W}^{t_{n_k}}(y^{(k)})-x\|_2<\frac{1}{4^k}$, we have 
	$$\|W_{j+t_{n_k}}\ldots W_{j+1}y_{j}^{(k)}U^{t_{n_k}}\|<\frac{1}{4^k},$$
	so $\|W_{j+t_{n_k}}\ldots W_{j+1}y_{j}^{(k)}\|<\frac{1}{4^k}$. Set $$D_j^{(k)}:=y_j^{(k)}\qquad\text{and}\qquad G_j^{(k)}:=W_jW_{j-1}\ldots W_{j-t_{n_k}+1}y_{j-t_{n_k}}^{(k)}U^{t_{n_k}}$$
	 for all $j\in [J]$. Then, $$\|D_j^{(k)}-P_m\|<\frac{1}{4^k}, \text{ } \|G_j^{(k)}-P_m\|<\frac{1}{4^k},\text{ }\|W_{j+t_{n_k}}\ldots W_{j+1}D_j^{(k)}\|<\frac{1}{4^k}$$ and $\|W_{j-t_{n_k}+1}^{-1}\ldots W_j^{-1}G_j^{(k)}\|<\frac{1}{4^k}$. Notice that since the coefficients of $y^{(k)}$ belong to $\mathcal{A}=B_0( H)$ which is an ideal of $B( H)$, by construction, $D_j^{(k)}$ and $G_j^{(k)}$ belong to $B_0( H)$ for all $j\in[J]$. This completes the proof. 
	 
	 $(2)\Rightarrow(1)$: Assume that the condition (2) holds. Choose two non-empty open subsets $\mathcal{O}_1$ and $\mathcal{O}_2$ of $\ell_2(\mathcal{A})$. Assume that
	 $\mathcal{F}$ denotes the set of all elements $x=(x_j)_j\in\ell_2(\mathcal{A})$ such that for some $J,m\in\mathbb{N}$, $x_j=0$ for all $j\notin[J]$ and $x_j=P_mx_j$ for all $j\in[J]$. Since $\mathcal F$ is dense in $\ell_2(\mathcal{A})$ \cite[Proposition 2.2.1]{MT}, we can find some $x=(x_j)_j\in \mathcal{O}_1$ and $y=(y_j)_j\in \mathcal{O}_2$ and sufficiently large $J,m$ such that $x_j=y_j=0$ for all $j\notin[J]$ and $x_j=P_mx_j$ and $y_j=P_my_j$ for all $j\in[J]$. Choose the sequences $\{D_j^{(k)}\}_k$ and $\{G_j^{(k)}\}_k$  for $j\in [J]$ and the increasing sequence $\{n_k\}_k$ satisfying (ii) regarding these $J,m$. For each $k$, let $u_k$ and $v_k$  be sequences in $\ell_2(\mathcal{A})$ defined by  $(u_k)_j:=D_j^{(k)}x_j$ for $j\in[J]$, $(u_k)_j:=0$ for $j\notin [J]$,  $(v_k)_j:=G_j^{(k)}y_j$ for $j\in[J]$ and $(v_k)_j:=0$ for all $j\notin[J]$. Set 
	 $$\eta_k:=u_k+S_{U,W}^{t_{n_k}}v_k.$$
	 Since $\|D_j^{(k)}-P_m\|\rightarrow 0$ and $\|G_j^{(k)}-P_m\|\rightarrow 0$ as $k$ tends to $\infty$, and $x_j=P_mx_j$ and $y_j=P_my_j$ for $j\in[J]$, it would be routine to see that $u_k\rightarrow x$ and $v_k\rightarrow y$ as $k\rightarrow \infty$. Next, for each $j\in [J]$ we have 
	\begin{align*}
	\|(S^{t_{n_k}}_{U,W}(v_k))_{j-t_{n_k}}\|&=\|W_{j+1-t_{n_k}}^{-1}\ldots W_j^{-1}G_j^{(k)}y_jU^{-t_{n_k}}\|\\
	&\leq \|W_{j+1-t_{n_k}}^{-1}\ldots W_j^{-1}G_j^{(k)}\|\,\|y_j\|\to 0,	
	\end{align*}
	as $k\to \infty$. On the other hand, for each $j\notin [J]$ we have $(S^{{t_{n_k}}}_{U,W}(v_k))_{j-{{t_{n_k}}}}=0$. Thus, $S^{t_{n_k}}_{U,W}(v_k)\to 0$ as $k\to \infty$. Similarly, since
	$$\parallel T_{U,W}^{t_{n_{k}}} (\mu_{k})_{j+t_{n_{k}}} \parallel = \parallel W_{j+t_{n_{k}}} \dots W_{j}D_{j}^{(k)}x_{j}U^{t_{n_{k}}}  \parallel   $$ 
	$$\leq \parallel  W_{j+t_{n_{k}}} \dots W_{j}D_{j}^{(k)} \parallel \text{ } \parallel x_{j} \parallel \rightarrow 0 $$  
	as $k \rightarrow \infty  $ for all $j \in [J]$ and $T_{U,W}^{t_{n_{k}}} (\mu_{k})_{j+t_{n_{k}}} =0$
	for $j \notin [J]$, we have that $T^{t_{n_k}}_{U,W}(u_k)\to 0$ as $k\to\infty$. It follows that $\eta_k\to x$ and $T^{t_{n_k}}_{U,W}(\eta_k)\to y$ as $k\to\infty$. Hence, the sequence $(T_{U,W,n})_n$ is topologically transitive, and thus it is densely hypercyclic on $\ell_2(\mathcal{A})$.
\end{proof}
%==================================================
\begin{theorem}\label{thm2}
		Let $(t_n)_n$ be an unbounded sequence of nonnegative integers. 
	Suppose that for every $j\in\mathbb{Z}$ there exist subsets $ H_j^{(1)}$ and $ H_j^{(2)}$ of $ H$ and a strictly increasing sequence $(n_k)_k\subseteq\mathbb{N}$ such that 
	$$\lim_{k \rightarrow \infty}W_{j+t_{n_k}}W_{j+t_{n_k}-1}\ldots W_{j+1}=0\quad\text{pointwise on } H_j^{(1)}$$
	and 
	$$\lim_{k \rightarrow \infty}W^{-1}_{j-t_{n_k}+1}W^{-1}_{j-t_{n_k}+2}\ldots W^{-1}_{j}=0\quad\text{pointwise on } H_j^{(2)}$$
	for all $j\in\mathbb{Z}$. Then, the sequence $(T_{U,W,n})_n$ is densely hypercyclic on $\ell_2(\mathcal{A})$, where  $T_{U,W,n}:=T_{U,W}^{t_n}$ for all $n\in\mathbb{N}$.
\end{theorem}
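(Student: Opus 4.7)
The plan is to verify condition (2) of Proposition \ref{Rni3 p2.1} from the pointwise hypothesis; dense hypercyclicity then follows immediately. Fix $J, m \in \mathbb{N}$. The key observation is that $P_m$ is the orthogonal projection onto the finite-dimensional subspace $F_m := \mathrm{span}\{e_{-m},\ldots,e_m\}$, so any compact operator that is supported on $F_m$ is close to $P_m$ in norm as soon as its value on each basis vector $e_i$ is close to $e_i$.

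First I would exploit the (implicit) density of each $H_j^{(1)}$ and $H_j^{(2)}$ in $H$ to select, for every $j \in [J]$ and every $i \in \{-m,\ldots,m\}$, approximants $f_{j,i}^{(k)} \in H_j^{(1)}$ and $g_{j,i}^{(k)} \in H_j^{(2)}$ with $\|f_{j,i}^{(k)} - e_i\|, \|g_{j,i}^{(k)} - e_i\| < \tfrac{1}{k(2m+1)}$. I would then let $D_j^{(k)}$ be the operator $h \mapsto \sum_{i=-m}^{m} \langle h, e_i\rangle f_{j,i}^{(k)}$ of rank at most $2m+1$, and define $G_j^{(k)}$ analogously from the $g_{j,i}^{(k)}$. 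Both lie in $B_0(H) = \mathcal{A}$, and a triangle-inequality estimate yields $\|D_j^{(k)} - P_m\|, \|G_j^{(k)} - P_m\| < 1/k$.

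The heart of the matter is to extract a subsequence of the given $(n_k)$ along which the norms $\|W_{j+t_{n_k}}\cdots W_{j+1} D_j^{(k)}\|$ and $\|W_{j-t_{n_k}+1}^{-1}\cdots W_j^{-1} G_j^{(k)}\|$ also tend to $0$ for every $j \in [J]$. Since $f_{j,i}^{(k)} \in H_j^{(1)}$ and $g_{j,i}^{(k)} \in H_j^{(2)}$, the pointwise hypothesis forces, for each fixed $k$, both $\|W_{j+t_{n_\ell}}\cdots W_{j+1} f_{j,i}^{(k)}\|$ and $\|W_{j-t_{n_\ell}+1}^{-1}\cdots W_j^{-1} g_{j,i}^{(k)}\|$ to go to $0$ as $\ell \to \infty$. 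A diagonal extraction then picks $\ell_1 < \ell_2 < \cdots$ such that at stage $k$ each of these finitely many quantities (indexed by $j \in [J]$ and $i \in \{-m,\ldots,m\}$) is below $\tfrac{1}{k(2m+1)}$; relabelling $(n_{\ell_k})_k$ as the new sequence and summing the rank-one pieces produces the required bounds. The only real obstacle is coordinating this diagonal argument, which is routine because at every stage just $2(2J+1)(2m+1)$ norms must simultaneously be controlled; and should the hypothesis actually supply a different sequence for each $j$, a preliminary diagonal subselection across the finite set $[J]$ yields a common strictly increasing sequence before the argument above is run.
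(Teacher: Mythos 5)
Your proposal is correct and follows essentially the same route as the paper: both reduce the theorem to condition (2) of Proposition \ref{Rni3 p2.1} by choosing approximants of the basis vectors $e_l$ from the (implicitly dense) sets $H_j^{(1)}$, $H_j^{(2)}$, assembling them into finite-rank operators $D_j^{(k)}$, $G_j^{(k)}$ supported on $\mathrm{span}\{e_{-m},\ldots,e_m\}$, and extracting a diagonal subsequence of $(n_k)$ along which the required operator norms vanish, using that pointwise and norm convergence coincide on finite-dimensional subspaces. Your explicit triangle-inequality bookkeeping and the remark about unifying possibly $j$-dependent sequences are just more detailed versions of steps the paper delegates to the proof of \cite[Theorem 2.10]{IT2}.
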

%===================================================
\begin{proof}
	Assume that $m,J\in\mathbb{N}$. Since for each $j\in\mathbb{Z}$, $ H_j^{(1)}$ and $ H_j^{(2)}$ are dense in $ H$, we can find sequences $(f_{i,l}^{(j)})_i\subseteq  H_j^{(1)}$ and $(g_{i,l}^{(j)})_i\subseteq  H_j^{(2)}$ such that $f_{i,l}^{(j)}\to e_l$ and $g_{i,l}^{(j)}\to e_l$ as $i\to\infty$ for all $j\in[J]$ and $l\in[m]$. By the assumptions, one can construct a subsequence $(n_{k_i})_i$ such that 
	$$\|W_{j+t_{n_{k_i}}}W_{j+t_{n_{k_i}}-1}\ldots W_{j+1}f_{i,l}^{(j)}\|<\frac{1}{2mi}$$
	and 
	$$\|W^{-1}_{j-t_{n_{k_i}}+1}W^{-1}_{j-t_{n_{k_i}}+2}\ldots W_{j}^{-1}g_{i,l}^{(j)}\|<\frac{1}{2mi}$$
	for all $j\in[J]$ and $l\in[m]$. For each $j\in[J]$ define the operators $D_j^{(i)}$ and $G_j^{(i)}$ as 
	$$D_j^{(i)}e_l:=\left\{
	\begin{array}{ll}
	f_{i,l}^{(j)},& \mbox{ if } l\in[m]\\\\
	0, & \mbox{ if } l\notin[m]
	\end{array}
	\right.
	\quad\text{and}\qquad
	G_j^{(i)}e_l:=\left\{
	\begin{array}{ll}
	g_{i,l}^{(j)},& \mbox{ if } l\in[m]\\\\
	0, & \mbox{ if } l\notin[m].
	\end{array}
	\right.$$
	By using the fact that strong convergence and uniform convergence coincide on finite dimensional subspaces, we can do same as in the proof of \cite[Theorem 2.10]{IT2}.
\end{proof}
%=================================================
\begin{example} \label{Rni3 e2.3}
	Let $ H:=L^2(\mathbb{R})$. Assume that $(w_j)_{j\in\mathbb Z}\subseteq L^\infty(\mathbb{R})$ such that each $w_j$ is positive and invertible in $L^\infty(\mathbb R)$, and also there exists an $M>0$ such that $\|w_j\|_\infty,\|w_j^{-1}\|_\infty\leq M$ for all $j\in\mathbb Z$. Assume in addition that there exists an $\epsilon>0$ such that $|w_j\chi_{[0,\infty)}|\leq 1-\epsilon$ for all $j\geq 0$ and $|w_j\chi_{(-\infty,0)}|\geq 1+\epsilon$ for all $j<0$. Let $(r_j)_j$ be a sequence of positive numbers such that $r_j\geq C$ for all $j\in\mathbb{Z}$ and some $C>0$. For each $j\in\mathbb{Z}$ let $\alpha_j$ to be the translation on $\mathbb R$ given by $\alpha_j(t):=t-r_j$. For each $j\in\mathbb Z$ assume that $W_j$ is an operator on $L^2(\mathbb R)$ defined by 
	$$W_j(f):=w_j\,(f\circ\alpha_j)$$ for every $f\in L^2(\mathbb R)= H$. Then, each $W_j$ is invertible in $B( H)$, and $\|W_j\|,\,$ $\|W_j^{-1}\|\leq M$. By some calculations we have 
	\begin{align*}
	W_{j+n}W_{j+n-1}\ldots W_jf&=w_{j+n}(w_{j+n-1}\circ\alpha_{j+n})\ldots\\
	&(w_{j}\circ\alpha_{j+1}\circ\ldots\alpha_{j+n})(f\circ\alpha_j\circ\ldots\circ\alpha_{j+n})
	\end{align*}
	for all $f\in  H$ and $j,n\in\mathbb N$. It follows that 
	\begin{align*}
	\|&W_{j+n}W_{j+n-1}\ldots W_jf\|\leq \\
	&\sup_{t\in{\rm supp}f}\big((w_{j+n}\circ \alpha_{j+n}^{-1}\circ\ldots\circ \alpha_j^{-1})(w_{j+n-1}\circ\alpha_{j+n-2}^{-1}\circ\ldots\circ\alpha_j^{-1})\ldots\\
	&(w_j\circ\alpha_j^{-1})\big)(t)\,\|f\|
	\end{align*}
	for all $f\in  H$ and $j,n\in\mathbb{N}$. Similarly, since for each $j$ we have $W_j^{-1}(f)=(w_j^{-1}\circ\alpha_j^{-1})\,(f\circ\alpha_j^{-1})$ for all $f\in  H$, we get that 
	\begin{align*}
	W_{j-n+1}^{-1}W_{j-n+2}^{-1}\ldots W_j^{-1}f&=(w_{j-n+1}^{-1}\circ\alpha_{j-n+1}^{-1})\,(w_{j-n+2}^{-1}\circ\alpha_{j-n+2}^{-1}\circ\alpha_{j-n+1}^{-1})\ldots\\ &\qquad(w_j^{-1}\circ\alpha_j^{-1}\circ\ldots\circ\alpha_{j-n+1}^{-1})(f\circ\alpha_j^{-1}\circ\ldots\circ\alpha_{j-n+1}^{-1})
	\end{align*}
	for all $f\in  H$ and $j,n\in\mathbb N$. Hence,
	$$\|W_{j-n+1}^{-1}W_{j-n+2}^{-1}\ldots W_j^{-1}f\| $$
	$$\leq \sup_{t\in{\rm supp}f}\Big((w_{j-n+1}^{-1}\circ\alpha_{j-n+2}\circ\ldots\circ\alpha_j)\,(w_{j-n+2}^{-1}\circ\alpha_{j-n+3}\circ\ldots\circ\alpha_{j})\ldots w_j\Big)(t)\,\|f\| $$

	for all $f\in  H$ and $j,n\in\mathbb N$. It follows that for every $j\in\mathbb Z$, the sequences $(W_{j+n}\dots W_j)_n$ and $(W_{j-n+1}\ldots W_j^{-1})_n$ converge pointwise on $C_c(\mathbb{R})$ which is dense in $L^2(\mathbb{R})$. Hence, the conditions  in Theorem \ref{thm2} are satisfied.

	In fact, it sufficies to assume that there exist two strictly increasing sequences  $\lbrace n_{k} \rbrace_{k}, \lbrace n_{i} \rbrace_{i} \in \mathbb{N} $ such that for each  $j \in \lbrace n_{k} \rbrace_{k} \cup \lbrace -n_{i} \rbrace_{i} $ the operator  $W_{j} $ is constructed as above. If, for all  $j \in \mathbb{Z} \setminus ( \lbrace n_{k} \rbrace_{k} \cup \lbrace -n_{i} \rbrace_{i}) ,$ we have that  $W_{j} (f) = w_{j}f $  for all  $ f \in H$ where  $w_{j} $  is a function on  $\mathbb{R} $  satisfying that  $\dfrac{1}{M} \leq \vert w_{j} \vert \leq 1 $  whenever  $j \geq 0 $  and  $M \geq \vert w_{j} \vert \geq 1 $  whenever  $j <0 ,$ then it is not hard to see that the conditions of Theorem \ref{thm2} are still satisfied. 		
		
\end{example}

\begin{proposition} \label{Rni3 p3.6}
	We have $(ii) \Rightarrow (i) .$\\
	(i) The operator $T_{U,W} $ is chaotic.\\ 
	(ii) For every $J,m \in \mathbb{N} $ there exist a strictly increasing sequence $\lbrace n_{k} \rbrace_{k} \subseteq \mathbb{N} $ and a sequence $\lbrace D_{i}^{(k)} \rbrace_{k}$ for $i \in [J] $ of operators in $B_{0}(H) $ such that 
	$$\lim_{k \rightarrow \infty} \parallel D_{i}^{(k)} - P_{m} \parallel = 0  $$ 
	and 
	$$\lim_{k \rightarrow \infty} \sum_{l=1}^{\infty} \parallel W_{j+ln_{k}} W_{j+ln_{k}-1} \dots W_{j+1}D_{j}^{(k)} \parallel $$ 
	$$= \lim_{k \rightarrow \infty} \sum_{l=1}^{\infty} \parallel W_{j-ln_{k}+1}^{-1} W_{j-ln_{k}+2}^{-1} \dots W_{j}^{-1}D_{j}^{(k)} \parallel = 0,$$ 
	for all $j \in [J],$  where the corresponding series are convergent for each $k.$
\end{proposition}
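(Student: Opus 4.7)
The plan is to derive both topological transitivity and density of periodic elements from (ii), which combined give chaoticity of $T_{U,W}$.

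First, topological transitivity follows immediately from Proposition \ref{Rni3 p2.1} applied with $t_n = n$: the sequence $\{n_k\}_k$ and the operators $D_j^{(k)}$ supplied by (ii) already satisfy part (2) of that proposition once we set $G_j^{(k)} := D_j^{(k)}$, because the $l=1$ terms of the two series in (ii) are precisely the single-block norms required there, and the approximation hypothesis $\|D_j^{(k)} - P_m\| \to 0$ is assumed on the nose.

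Second, for density of periodic elements, the plan is to build approximants as a two-sided telescoping series of shifts. Starting from the dense subspace $\mathcal{F}$ consisting of vectors $x = (x_j)_j$ with $x_j = P_m x_j$ for $j \in [J]$ and $x_j = 0$ otherwise, and given $\varepsilon > 0$, I would apply (ii) with the same $J, m$ and pick $k$ so large that $n_k > 2J$; then set $x'_j := D_j^{(k)} x_j$ on $[J]$ (and zero elsewhere) and
\[
\tilde{x} := x' + \sum_{l=1}^{\infty} T_{U,W}^{l n_k}\, x' + \sum_{l=1}^{\infty} S_{U,W}^{l n_k}\, x'.
\]
Since $T_{U,W}^{l n_k} x'$ is supported on indices $[ln_k - J, ln_k + J]$ and $S_{U,W}^{l n_k} x'$ on $[-ln_k - J, -ln_k + J]$, the choice $n_k > 2J$ makes all summands pairwise disjointly supported. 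Applying $T_{U,W}^{n_k}$ term-by-term and using $T_{U,W}^{n_k} S_{U,W}^{l n_k} = S_{U,W}^{(l-1) n_k}$ (from $T_{U,W}S_{U,W} = I$) yields $T_{U,W}^{n_k}\tilde{x} = \tilde{x}$, so $\tilde{x}$ is a periodic element of period $n_k$.

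For the analytic part, the disjoint supports make $\langle T_{U,W}^{l n_k} x', T_{U,W}^{m n_k} x'\rangle = 0$ for $l \neq m$, so
\[
\Big\|\sum_{l=1}^{L} T_{U,W}^{l n_k} x'\Big\|^2 = \Big\|\sum_{l=1}^{L}\langle T_{U,W}^{l n_k} x',\, T_{U,W}^{l n_k} x'\rangle\Big\| \leq \sum_{l=1}^{L}\|T_{U,W}^{l n_k} x'\|^2,
\]
and a direct expansion gives $\|T_{U,W}^{l n_k} x'\| \leq \sum_{j \in [J]}\|W_{j + l n_k} \cdots W_{j+1} D_j^{(k)}\|\,\|x_j\|$ (with the analogous bound for $S_{U,W}^{l n_k} x'$ using the backward $W^{-1}$-products). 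Combined with the inequality $\sum_l a_l^2 \leq (\sum_l a_l)^2$ for nonnegative $a_l$, the hypothesis (ii) forces both series defining $\tilde{x}$ to converge in $\ell_2(\mathcal{A})$ and their tails (apart from $x'$) to have norms tending to zero as $k \to \infty$; meanwhile $\|x' - x\| \to 0$ by $\|D_j^{(k)} - P_m\| \to 0$. Hence $\|\tilde{x} - x\| < \varepsilon$ for $k$ large, giving density of periodic elements and thus chaoticity.

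The main obstacle will be the norm bookkeeping in the Hilbert module setting: since the inner product is $\mathcal{A}$-valued, one must carefully exploit disjoint supports to kill cross inner products and then use the subadditivity $\|\sum A_i\| \leq \sum \|A_i\|$ for positive elements of $\mathcal{A}$. The precise reason (ii) is stronger than what Proposition \ref{Rni3 p2.1} requires is that the $\ell^1$-summability over $l$ of the weight products, rather than mere vanishing of the $l=1$ term, is exactly what ensures absolute convergence of the periodic approximant $\tilde{x}$.
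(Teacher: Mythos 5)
Your proposal is correct and follows essentially the same route as the paper: transitivity is obtained from Proposition \ref{Rni3 p2.1} (taking $G_j^{(k)}=D_j^{(k)}$, the $l=1$ terms of the series giving the required limits), and the periodic approximant is the same two-sided sum $\sum_{l\geq 0}T_{U,W}^{ln_k}x' + \sum_{l\geq 1}S_{U,W}^{ln_k}x'$, fixed by $T_{U,W}^{n_k}$ and converging to $x$ by the $\ell^1$-summability in (ii). The only cosmetic difference is that you justify convergence via disjoint supports and the $\mathcal{A}$-valued inner product, where the paper simply sums the termwise norm bounds.
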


\begin{proof}
	By Proposition \ref{Rni3 p2.1}. it suffices to show that $\mathcal{P}(T_{U,W}^{n})_{n} $ is dense in $\ell_2(\mathcal{A}).$ Let $\mathcal{O} $ be an open subset of $\ell_2(\mathcal{A}) $ and $ x=(x_{j})_{j \in \mathbb{Z}} \in \mathcal{O}.$ Then there exist some $J,m \in \mathbb{N}$ such that $y \in \mathcal{O} $ with
	$$y_{j}=\left\{
	\begin{array}{ll}
	P_{m}x_{j},& \mbox{ for } j\in[J],\\\\
	0, & \mbox{ else } .
	\end{array}
	\right.
	$$
	Choose sequences $\lbrace n_{k} \rbrace_{k} $ and $\lbrace D_{i}^{(k)} \rbrace_{k}$ with $i \in [J] $  that satisfy the assumptions in (ii) with respect to $J $ and $m.$ For each $k \in \mathbb{N} ,$ set $Z^{(k)} = (Z_{j}^{(k)})_{j \in \mathbb{Z}}$ to be given by 
	$$Z_{j}^{(k)}=\left\{
	\begin{array}{ll}
	D_{j}^{(k)}y_{j},& \mbox{ for } j\in[J],\\\\
	0, & \mbox{ else } ,
	\end{array}
	\right.
	$$
	and put 
	$$ q_{k}= \sum_{l=0}^{\infty} T_{U,W}^{ln_{k}}(Z^{(k)}) + \sum_{l=1}^{\infty} S_{U,W}^{ln_{k}}(Z^{(k)}).$$ 
	Now, as in the proof of Proposition \ref{Rni3 p2.1} part (2) implies (1), we observe that for each $j \in [J] $ and $l,k \in \mathbb{N} $ we have 	
	$$\parallel  T_{U,W}^{ln_{k}}(Z^{(k)})_{j-ln_{k}}  \parallel \leq
	\parallel  W_{j+ln_{k}} W_{j+ln_{k}-1}  \dots W_{j+1}D_{j}^{(k)} \parallel \parallel y_{j} \parallel ,$$ 
	and 
	$$\parallel  S_{U,W}^{ln_{k}}(Z^{(k)})_{j-ln_{k}}  \parallel \leq
	\parallel  W_{j-ln_{k}+1}^{-1} W_{j-ln_{k}+2}^{-1}  \dots W_{j}^{-1}D_{j}^{(k)} \parallel \parallel y_{j} \parallel ,$$ 
	whereas for $j \notin [J] $ we have that 
	$$T_{U,W}^{ln_{k}}(Z^{(k)})_{j-ln_{k}} =  S_{U,W}^{ln_{k}}(Z^{(k)})_{j-ln_{k}} = 0 .$$ 
	So 
	$$\parallel q_{k} - y \parallel \leq \parallel D_{(0)}^{(k)} - P_{m} \parallel \parallel y_{0} \parallel $$ $$+ \sum_{l=1}^{\infty} \sum_{j \in [J]} \parallel  W_{j+ln_{k}} W_{j+ln_{k}-1}  \dots W_{j+1}D_{j}^{(k)} \parallel \parallel y_{j} \parallel $$
	
	$$+ \sum_{l=1}^{\infty} \sum_{j \in [J]} \parallel  W_{j-ln_{k}+1}^{-1} W_{j-ln_{k}+2}^{-1}  \dots W_{j}^{-1}D_{j}^{(k)} \parallel \parallel y_{j} \parallel $$
	
	$$\leq  \parallel D_{(0)}^{(k)} - P_{m} \parallel \parallel y_{0} \parallel $$ $$+ \sum_{j \in [J]}     \parallel y \parallel (\sum_{l=1}^{\infty} \parallel  W_{j+ln_{k}} W_{j+ln_{k}-1}  \dots W_{j+1}D_{j}^{(k)} \parallel  $$
	
	$$+ \sum_{l=1}^{\infty} \parallel W_{j-ln_{k}+1}^{-1} W_{j-ln_{k}+2}^{-1}  \dots W_{j}^{-1}D_{j}^{(k)} \parallel  )  $$ 
	for all $k \in \mathbb{N} ,$ which gives that $q_{k} \rightarrow y $ as $k \rightarrow \infty .$\\
	Moreover, it is straightforward to check that $T_{U,W}^{ln_{k}} (q_{k}) = q_{k} $ for all $l$ and $k,$ hence $q_{k} \in \mathcal{P}(T_{U,W}^{n})_{n} $ for all $k.$
\end{proof}

Next, for each  $ n \in \mathbb{N},$ we set $C_{U,W}^{(n)} = \dfrac{1}{2} (T_{U,W}^n +S_{U,W}^n ). $ 

\begin{proposition} \label{ni pr3.5}
	We have that $(ii)$ implies $(i)$.\\
	(i) The sequence  $ \lbrace C_{U,W}^{(n)} \rbrace_{n} $ is topologically transitive on  $l_{2}(\mathcal{A})  .$\\
	(ii) For every  $J,m \in \mathbb{N} $  there exist a strictly increasing sequence 
	$ \lbrace n_{k} \rbrace_{k} \subseteq \mathbb{N} $ and sequences of operators  $\lbrace D_{i}^{(k)} \rbrace_{k} ,$ $\lbrace G_{i}^{(k)} \rbrace_{k} $ in  $B_{0}(H) $  for  $i \in [J] $  such that for all  $j \in [J] $ we have that
	
	$$\lim_{k \rightarrow \infty} \parallel D_{j}^{(k)} - P_{m} \parallel =\lim_{k \rightarrow \infty} \parallel G_{j}^{(k)} - P_{m} \parallel=0, $$
	$$\lim_{k \rightarrow \infty} \parallel W_{j+n_{k}} W_{j+n_{k}-1} \dots W_{j+1} D_{j}^{(k)} \parallel $$ 
	$$=  
	\lim_{k \rightarrow \infty}\parallel  W_{j-n_{k}+1}^{-1} W_{j-n_{k}+2}^{-1} \dots W_{j}^{-1} D_{j}^{(k)} \parallel    $$
	
	$$=\lim_{k \rightarrow \infty} \parallel W_{j+n_{k}} W_{j+n_{k}-1} \dots W_{j+1} G_{j}^{(k)} \parallel $$ 
	$$=  
	\lim_{k \rightarrow \infty}\parallel  W_{j-n_{k}+1}^{-1} W_{j-n_{k}+2}^{-1} \dots W_{j}^{-1} G_{j}^{(k)} \parallel  = 0 $$
	and
	$$\lim_{k \rightarrow \infty} \parallel W_{j+2n_{k}} W_{j+2n_{k}-1} \dots W_{j+1} G_{j}^{(k)} \parallel $$ 
	$$= \lim_{k \rightarrow \infty}\parallel  W_{j-2n_{k}+1}^{-1} W_{j-2n_{k}+2}^{-1} \dots W_{j}^{-1} G_{j}^{(k)} \parallel =0   $$
	
\end{proposition}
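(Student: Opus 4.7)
The plan is to adapt the implication $(2)\Rightarrow(1)$ of Proposition \ref{Rni3 p2.1} to the averaged sequence $C_{U,W}^{(n)}=\tfrac12(T^{n}_{U,W}+S^{n}_{U,W})$. Topological transitivity amounts to producing, for arbitrary non-empty open sets $\mathcal O_1,\mathcal O_2\subseteq\ell_2(\mathcal A)$, a sequence $\eta_k\in\ell_2(\mathcal A)$ with $\eta_k\to x\in\mathcal O_1$ and $C^{(n_k)}_{U,W}(\eta_k)\to y\in\mathcal O_2$ along some $(n_k)$. Using density of the finite-support $P_m$-truncated vectors \cite[Proposition 2.2.1]{MT}, I would choose $x\in\mathcal O_1,\,y\in\mathcal O_2$ with $x_j=P_m x_j,\,y_j=P_m y_j$ for $j\in[J]$ and zero otherwise, then invoke (ii) with these $J,m$ to obtain the strictly increasing $(n_k)$ together with the operators $D_j^{(k)},G_j^{(k)}\in B_0(H)$.

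The key algebraic observation is that because $T^{n}_{U,W}$ and $S^{n}_{U,W}$ are mutually inverse, the average $C^{(n)}_{U,W}$ calls for a \emph{three-term} ansatz:
$$\eta_k:=u_k+2\,S^{n_k}_{U,W}(v_k)+2\,T^{n_k}_{U,W}(w_k),$$
with $(u_k)_j:=D_j^{(k)}x_j$ and $(v_k)_j:=(w_k)_j:=\tfrac12 G_j^{(k)}y_j$ for $j\in[J]$, zero elsewhere. Expanding and using $T^{n_k}_{U,W}S^{n_k}_{U,W}=S^{n_k}_{U,W}T^{n_k}_{U,W}=\mathrm{Id}$ yields
$$C^{(n_k)}_{U,W}(\eta_k)=\tfrac12 T^{n_k}_{U,W}(u_k)+\tfrac12 S^{n_k}_{U,W}(u_k)+v_k+S^{2n_k}_{U,W}(v_k)+w_k+T^{2n_k}_{U,W}(w_k),$$
where the surviving main part $v_k+w_k=G_j^{(k)}y_j$ tends to $y$ since $\|G_j^{(k)}-P_m\|\to0$ and $P_m y_j=y_j$. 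Analogously $u_k\to x$. Every remaining term is a forward or backward shift of $u_k$ at exponent $n_k$ or of $v_k,w_k$ at exponents $n_k$ and $2n_k$, and each is controlled coordinatewise by the familiar estimate of the form
$$\|(S^{n_k}_{U,W}(v_k))_{j-n_k}\|\leq\tfrac12\,\|W^{-1}_{j-n_k+1}\cdots W_j^{-1}G_j^{(k)}\|\,\|y_j\|,$$
and the analogues for $T^{n_k}_{U,W}(w_k),\,T^{n_k}_{U,W}(u_k),\,S^{n_k}_{U,W}(u_k),\,S^{2n_k}_{U,W}(v_k),\,T^{2n_k}_{U,W}(w_k)$. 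The $D$-decays at level $n_k$ kill the first two remainders, the $G$-decays at level $n_k$ make the perturbation $2S^{n_k}(v_k)+2T^{n_k}(w_k)$ small so that $\eta_k\to x$, and the $G$-decays at level $2n_k$ kill the residuals $S^{2n_k}(v_k),T^{2n_k}(w_k)$ in the image.

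The main obstacle I anticipate is purely bookkeeping: one must carefully expand the six summands in $C^{(n_k)}_{U,W}(\eta_k)$, identify precisely which pairings collapse via $T^{n_k}S^{n_k}=\mathrm{Id}$, and match each of the surviving six estimates with the right decay hypothesis in (ii) -- in particular, understanding why the $2n_k$-level hypotheses are indispensable (they arise from applying $T^{n_k}$ to the backward-shift correction $S^{n_k}(v_k)$ and $S^{n_k}$ to the forward-shift correction $T^{n_k}(w_k)$). Once this matching is done, the estimates reduce to the same coordinatewise bounds already carried out in Proposition \ref{Rni3 p2.1}, and for sufficiently large $k$ one has $\eta_k\in\mathcal O_1$ and $C^{(n_k)}_{U,W}(\eta_k)\in\mathcal O_2$, so $C^{(n_k)}_{U,W}(\mathcal O_1)\cap\mathcal O_2\neq\varnothing$, proving topological transitivity.
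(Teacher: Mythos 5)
Your proposal is correct and follows essentially the same route as the paper: the paper takes $\eta_k=\mu_k+T_{U,W}^{n_k}(v_k)+S_{U,W}^{n_k}(v_k)$ with $(v_k)_j=G_j^{(k)}y_j$, which is literally the same vector as your three-term ansatz once your two half-weighted corrections $v_k=w_k=\tfrac12 G_j^{(k)}y_j$ are recombined, and the expansion of $C_{U,W}^{(n_k)}(\eta_k)$ into the main term $v_k+w_k\to y$ plus residuals controlled by the $D$-decays at level $n_k$ and the $G$-decays at levels $n_k$ and $2n_k$ matches the paper's list of limits exactly. The coordinatewise estimates you cite are the same ones used in Proposition \ref{Rni3 p2.1}, so no further comment is needed.
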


\begin{proof}
	Let  $\mathcal{O}_{1} $ and  $\mathcal{O}_{2} $  be two non-empty open subset of  $l_{2}(\mathcal{A}) .$ As in the proof of Proposition \ref{Rni3 p2.1},  part $2) \Rightarrow 1), $ we can find some   $J,m \in \mathbb{N} ,$    $x=(x_{j})_{j} \in \mathcal{O}_{1} $  and   $ y=(y_{j})_{j} \in \mathcal{O}_{2}$  such that   $x_{j} = y_{j} =0 $   for all   $ j \neq [J]$   and   $ x_{j}=P_{m}x_{j},$   $y_{j}=P_{m}y_{j} $  for all   $j \in [J] .$ Choose the sequences $ \lbrace D_{j}^{(k)} \rbrace_{k},$ $ \lbrace G_{j}^{(k)} \rbrace_{k}$ for   $j \in [J] $  and the strictly increasing sequence   $\lbrace n_{k} \rbrace_{k} \subseteq \mathbb{N}$   that satisfy the assumptions in   $(ii) $ with respect to these   $J,m .$ For each   $k \in \mathbb{N} ,$ let   $\mu_{k}, v_{k} \in l_{2}(\mathcal{A}) $  be given by   $(\mu_{k})_{j}=D_{j}^{(k)}x_{j} ,$ $(v_{k})_{j}=G_{j}^{(k)}y_{j} $ for   $j \in [J] $   and   $(\mu_{k})_{j}=(v_{k})_{j} =0 $   for   $ j \notin [J].$ Set 
	$$\eta_{k} = \mu_{k} + T_{U,W}^{n_{k}} (v_{k}) +  S_{U,W}^{n_{k}} (v_{k}).$$
	
	By the similar calculations as in the proof of Proposition \ref{Rni3 p2.1}, part $2) \Rightarrow 1) ,$ we can show that the assumptions in $(ii)$ imply that 
	$$\lim_{k \rightarrow \infty} T_{U,W}^{n_{k}} (v_{k}) = \lim_{k \rightarrow \infty} S_{U,W}^{n_{k}} (v_{k}) =0, $$
	$$\lim_{k \rightarrow \infty} T_{U,W}^{2n_{k}} (v_{k}) = \lim_{k \rightarrow \infty} S_{U,W}^{n_{2k}} (v_{k}) =0, $$
	$$\lim_{k \rightarrow \infty} \parallel \mu_{k} - x \parallel = \lim_{k \rightarrow \infty} \parallel v_{k} - y \parallel = 0,  $$
	$$\lim_{k \rightarrow \infty} T_{U,W}^{n_{k}} (\mu_{k}) =
	\lim_{k \rightarrow \infty} S_{U,W}^{n_{k}} (\mu_{k}) = 0.$$

	It follows that  $\eta_{k} \rightarrow x $ and  $ C_{U,W}^{(n_{k})} (\eta_{k}) \rightarrow y  $ as  $k \rightarrow \infty . $ 	
\end{proof}

\begin{example} \label{Ni e3.6}
	Let $ H=L^{2}(\mathbb{R}).$ Given $m \in \mathbb{N} ,$ put for each $j,k \in \mathbb{N} $ the operator $D_{j}^{(k)} $ to be $D_{j}^{(k)}=G_{j}^{(k)} = \mathcal {L}_{\mathcal X_{[-k,k]}}P_{m}$ where $\mathcal {L}_{\mathcal X_{[-k,k]}} $ denotes the multiplication operator by $\mathcal {X}_{[-k,k]} .$ Since the convergence in the operator norm and the pointwise convergence coincide on finite dimensional spaces, it follows that $\parallel D_{j}^{(k)} - P_{m} \parallel \rightarrow 0 $ as $k \rightarrow \infty $ for all $j \in \mathbb{N} .$ If we now for each $j \in \mathbb{N} $ let $W_{j} $ be the operator from Example \ref{Rni3 e2.3}, it is not hard to see that the sufficient conditions of Proposition \ref{Rni3 p3.6} and Proposition \ref{ni pr3.5} are satisfied in this case.
\end{example}

At the end of this section we give some necessary conditions for the set of periodic elements of $T_{U,W}$ to be dense in $l_{2}(\mathcal{A}).$ For an operator $R \in B(H)$ we set 
$$m(R):= sup \lbrace C>0 \text{ } | \text{ } \parallel Rh \parallel \geq C \parallel h \parallel \text{ for all } h \in H \rbrace. $$

Further, for $J,m \in \mathbb{N}$ we let $\tilde{P}_{J,m} \in l_{2}(\mathcal{A}) $ be given as

$$(\tilde{P}_{J,m})_{j}=\left\{
\begin{array}{ll}
P_{m},& \mbox{ for } j\in[J],\\\\
0, & \mbox{ else } .
\end{array}
\right.
$$

We have the following proposition. 
\begin{proposition}
	Let $J,m \in \mathbb{N}.$ We have that (i) implies (ii).\\
	(i) $\tilde{P}_{J,m}$ belongs to the closure of $\mathcal{P}(T_{U,W}^{n})_{n}.$ \\
	(ii) There exists a strictly increasing sequence $\lbrace n_{k} \rbrace_{k} \subseteq \mathbb{N}$ such that 
	$$\lim_{k \rightarrow \infty} m ( W_{j+n_{k}} W_{j+n_{k}-1} \dots W_{j+1} )=0  $$
	for all $j \in [J].$
\end{proposition}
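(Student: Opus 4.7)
The plan is to unpack the hypothesis: since $\tilde{P}_{J,m}$ lies in the closure of $\mathcal{P}(T_{U,W}^n)_n$, for each $k$ we can pick a periodic element $q_k\in\ell_2(\mathcal{A})$ with $\|q_k-\tilde{P}_{J,m}\|<1/k$ whose period is some $N_k\in\mathbb{N}$, meaning $T_{U,W}^{\ell N_k}q_k=q_k$ for every $\ell\in\mathbb{N}$. By replacing $N_k$ with a sufficiently large multiple $\ell_k N_k=:n_k$, we can arrange that the sequence $(n_k)_k$ is strictly increasing and in particular that $n_k>2J$, so that $j+n_k\notin[J]$ for every $j\in[J]$. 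This is essentially free; the multiplicativity of the periodicity relation guarantees $T_{U,W}^{n_k}q_k=q_k$ still holds.

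The main computation is to apply the explicit formula $(T_{U,W}^{n_k}x)_\xi=W_\xi W_{\xi-1}\cdots W_{\xi-n_k+1}x_{\xi-n_k}U^{n_k}$ at the coordinate $\xi=j+n_k$. Together with $T_{U,W}^{n_k}q_k=q_k$ this yields
\[
(q_k)_{j+n_k}=W_{j+n_k}W_{j+n_k-1}\cdots W_{j+1}\,(q_k)_j\,U^{n_k}
\]
for every $j\in[J]$. Multiplying on the right by $U^{-n_k}=U^{*n_k}$ (a unitary, so norms are preserved) gives
\[
\bigl\|W_{j+n_k}\cdots W_{j+1}(q_k)_j\bigr\|=\bigl\|(q_k)_{j+n_k}\bigr\|.
\]
From the standard module-norm estimate $\|(q_k-\tilde{P}_{J,m})_i\|\le\|q_k-\tilde{P}_{J,m}\|$ for every coordinate $i$, combined with $j+n_k\notin[J]$ (so $(\tilde{P}_{J,m})_{j+n_k}=0$), the right-hand side is bounded by $1/k$, while $\|(q_k)_j\|\ge\|P_m\|-1/k=1-1/k$ for $j\in[J]$.

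To conclude, use the elementary fact that for any bounded operator $R$ on $H$ and any $A\in B(H)$ one has $\|RA\|\ge m(R)\,\|A\|$: indeed, $\|RAh\|\ge m(R)\|Ah\|$ for every $h\in H$, and taking the supremum over the unit ball yields the inequality. Applying this with $R=W_{j+n_k}\cdots W_{j+1}$ and $A=(q_k)_j$ gives
\[
m\bigl(W_{j+n_k}\cdots W_{j+1}\bigr)\cdot\bigl(1-\tfrac{1}{k}\bigr)\le\bigl\|W_{j+n_k}\cdots W_{j+1}(q_k)_j\bigr\|<\tfrac{1}{k},
\]
so $m(W_{j+n_k}\cdots W_{j+1})\to 0$ as $k\to\infty$, for every $j\in[J]$ simultaneously.

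I do not expect a real obstacle here; the only subtlety is the mild bookkeeping needed to ensure that one and the same strictly increasing $(n_k)$ serves all $j\in[J]$ (handled by enlarging $N_k$ to pass the threshold $n_k>2J$) and noting the elementary inequality $\|RA\|\ge m(R)\|A\|$ that converts a norm bound on $RA$ into a bound on $m(R)$.
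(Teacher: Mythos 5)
Your proof is correct and follows essentially the same route as the paper: pick periodic approximants of $\tilde{P}_{J,m}$, read off the coordinate at $\xi=j+n_k$ of $T_{U,W}^{n_k}q_k=q_k$, and convert the resulting smallness of $\|W_{j+n_k}\cdots W_{j+1}(q_k)_j\|$ together with $\|(q_k)_j\|\geq 1-1/k$ into a bound on $m(W_{j+n_k}\cdots W_{j+1})$. The only (harmless) difference is that you use the operator-norm inequality $\|RA\|\geq m(R)\|A\|$ directly, where the paper inserts $P_m$ and chooses an explicit near-maximizing vector $h_j^{(k)}$; your bookkeeping with the threshold $n_k>2J$ is in fact slightly more careful than the paper's $n_k>J$.
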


\begin{proof}
	Let $J,m \in \mathbb{N}$ be given. For each $k \in \mathbb{N}$ there exists by the assumption some $x^{(k)} \in l_{2} (\mathcal{A})$ and some $n_{k} \in \mathbb{N}$ such that
	$$\dfrac{1}{k^{2}} \geq \parallel x^{(k)} - \tilde{P}_{J,m} \parallel_{2} \text{ and } T_{U,W}^{n_{k}} (x^{(k)}) = x^{(k)} .$$
	Hence, for each $k \in \mathbb{N}$ and $j \in [J]$ we have that 
	$$\dfrac{1}{k^{2}} \geq \parallel x_{j}^{(k)} - {P}_{m} \parallel \geq \parallel x_{j}^{(k)} P_{m} - P_{m} \parallel, $$ 
	whic gives that 
	$$ \parallel x_{j}^{(k)} P_{m}  \parallel \geq 1 - \dfrac{1}{k^{2}} $$
	
	Thus, for each $ k \in \mathbb{N} $ and $j \in [J]$ we can find some $h_{j}^{(k)} \in H$ with $ h_{j}^{(k)} \neq 0$ such that 
	$$\parallel x_{j}^{(k)} P_{m} h_{j}^{(k)} \parallel \geq (1 - \dfrac{1}{k^{2}}) \parallel h_{j}^{(k)} \parallel. $$ 
	Now, we also have that 
	$$ \dfrac{1}{k^{2}} \geq \parallel  T_{U,W}^{n_{k}} (x^{(k)}) -  \tilde{P}_{J,m}      \parallel_{2}    $$
	since $ T_{U,W}^{n_{k}} (x^{(k)}) = x^{(k)} $ for each $k \in \mathbb{N}.$ We may in fact assume that $J < n_{1} < n_{2} < \dots$ . Hence, as  $J < n_{1} < n_{2} < \dots$ , we must have $\dfrac{1}{k^{2}} \geq \parallel (T_{U,W}^{n_{k}} (x^{(k)}))_{j} \parallel $ for all $j \in [J]$ which gives for all $j \in [J]$ and $k \in \mathbb{N}$ that
	$$\parallel W_{j+n_{k}} W_{j+n_{k}-1} \dots W_{j+1} x_{j}^{(k)} \parallel \leq \dfrac{1}{k^{2}} .  $$
	Thus,
	$$ \dfrac{1}{k^{2}} \geq \parallel W_{j+n_{k}} W_{j+n_{k}-1} \dots W_{j+1} x_{j}^{(k)} P_{m} \parallel ,$$ 
	so
	$$ \dfrac{1}{k^{2}} \parallel h_{j}^{(k)} \parallel \geq \parallel W_{j+n_{k}} W_{j+n_{k}-1} \dots W_{j+1} x_{j}^{(k)} P_{m} \parallel \text{ } \parallel h_{j}^{(k)} \parallel $$ 
	$$\geq \parallel W_{j+n_{k}} W_{j+n_{k}-1} \dots W_{j+1} x_{j}^{(k)} P_{m}  h_{j}^{(k)} \parallel   $$
	$$\geq m (  W_{j+n_{k}} W_{j+n_{k}-1} \dots W_{j+1} ) \parallel x_{j}^{(k)} P_{m}  h_{j}^{(k)}  \parallel  $$
	$$\geq (1- \dfrac{1}{k^{2}} ) \parallel h_{j}^{(k)} \parallel m ( W_{j+n_{k}} W_{j+n_{k}-1} \dots W_{j+1}  )   $$
	for all $j \in [J]$ and $k \in \mathbb{N}.$ Since $h_{j}^{(k)} \neq 0, $ we can divide on the both side of the inequality by $\parallel h_{j}^{(k)} \parallel$ and obtain that 
	$$\dfrac{1}{k^{2}-1} \geq m ( W_{j+n_{k}} W_{j+n_{k}-1} \dots W_{j+1}  )  $$
	for all $j \in [J]$ and $k \in \mathbb{N}.$
\end{proof}

Similarly we can prove the following proposition. 
\begin{proposition}
	Let $J,m \in \mathbb{N}.$ We have that (i) implies (ii).\\
	(i) $\tilde{P}_{J,m}$ belongs to the closure of $\mathcal{P}(S_{U,W}^{n})_{n}.$\\
	(ii) There exists a strictly increasing sequence $\lbrace n_{k} \rbrace_{k} \subseteq \mathbb{N}$ such that
	$$\lim_{k \rightarrow \infty} m ( W_{j-n_{k}+1}^{-1} W_{j-n_{k}+2}^{-1}  \dots W_{j}^{-1}) = 0 $$
	for all $j \in [J].$
\end{proposition}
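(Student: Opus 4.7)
The plan is to mirror the preceding proposition almost verbatim, with $T_{U,W}^{n_k}$ replaced throughout by $S_{U,W}^{n_k}$ and the forward product $W_{j+n_k}\cdots W_{j+1}$ replaced by the backward product of inverses $W_{j-n_k+1}^{-1}\cdots W_j^{-1}$. The crucial input is the identity $(S_{U,W}^n(y))_\xi=W_{\xi+1}^{-1}W_{\xi+2}^{-1}\cdots W_{\xi+n}^{-1}\,y_{\xi+n}\,U^{*n}$ derived at the beginning of Section \ref{Section 3}, together with the fact that right multiplication by the unitary $U^{*n}$ preserves operator norms.

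From (i) I would extract, for each $k\in\mathbb N$, an element $x^{(k)}\in\ell_2(\mathcal A)$ and an integer $n_k\in\mathbb N$ satisfying $\|x^{(k)}-\tilde P_{J,m}\|_2\le 1/k^2$ and $S_{U,W}^{n_k}(x^{(k)})=x^{(k)}$, and after passing to a subsequence I may arrange $J<n_1<n_2<\cdots$. For $j\in[J]$ the coordinatewise estimate $\|x_j^{(k)}-P_m\|\le 1/k^2$ implies $\|x_j^{(k)}P_m\|\ge 1-1/k^2$, so by the definition of operator norm there exists a nonzero $h_j^{(k)}\in H$ with $\|x_j^{(k)}P_m h_j^{(k)}\|\ge(1-1/k^2)\|h_j^{(k)}\|$.

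On the other hand, since $n_k>J$ the index $j-n_k$ lies outside $[J]$, so $(\tilde P_{J,m})_{j-n_k}=0$; combining this with $S_{U,W}^{n_k}(x^{(k)})=x^{(k)}$ and the $S$-formula, and stripping off the trailing unitary $U^{*n_k}$, yields
$$\|W_{j-n_k+1}^{-1}\cdots W_j^{-1}\,x_j^{(k)}\|=\|(S_{U,W}^{n_k}(x^{(k)}))_{j-n_k}\|\le\tfrac{1}{k^2}.$$
Applying this operator norm bound to the vector $P_m h_j^{(k)}$ and then inserting the lower bound from the definition of $m(\cdot)$ produces
$$m\bigl(W_{j-n_k+1}^{-1}\cdots W_j^{-1}\bigr)\bigl(1-\tfrac{1}{k^2}\bigr)\|h_j^{(k)}\|\le\tfrac{1}{k^2}\|h_j^{(k)}\|,$$
which, after dividing by the positive quantity $(1-1/k^2)\|h_j^{(k)}\|$, gives $m(W_{j-n_k+1}^{-1}\cdots W_j^{-1})\le 1/(k^2-1)$ for every $j\in[J]$. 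Letting $k\to\infty$ establishes (ii).

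No serious obstacle arises; the argument is structurally identical to that of the preceding proposition. The only items requiring care are the index bookkeeping in the $S_{U,W}^n$ formula, namely reading off the $(j-n_k)$-th coordinate to expose the precise backward product of inverses applied to $x_j^{(k)}$, and the observation that $U^{*n_k}$, being unitary, can be absorbed without affecting the operator norm of the relevant expression.
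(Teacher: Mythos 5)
Your argument is correct and is exactly the intended one: the paper proves the $T_{U,W}$ version in detail and disposes of this statement with ``Similarly we can prove the following proposition,'' and your proof is precisely that mirror image, using $(S_{U,W}^{n}(y))_{\xi}=W_{\xi+1}^{-1}\cdots W_{\xi+n}^{-1}y_{\xi+n}U^{*n}$ and reading off the $(j-n_k)$-th coordinate. The only (shared with the paper) pedantic point is that one should arrange $n_k>2J$ rather than $n_k>J$ so that $j-n_k\notin[J]$ for every $j\in[J]$, which is harmless since periodicity lets you replace $n_k$ by any multiple of the period.
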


\section{Hypercyclic operators on $C^{*}$-algebras } \label{Sec 4}
	Let $\mathcal{A}$ be a non-unital $C^{*} $-algebra such that $\mathcal{A}$ is a closed two-sided ideal in a unital $C^{*} $-algebra $\mathcal{A}_{1}.$ Let $ \Phi $ be an isometric $*$-isomorphism of $\mathcal{A}_{1}$ such that $\Phi (\mathcal{A}) = \mathcal{A} .$ Assume that there exists a net $\lbrace p_{\alpha} \rbrace_{\alpha} \subseteq \mathcal{A}  $ consisting of self-adjoint elements with $ \parallel p_{\alpha} \parallel \leq 1$ for all $\alpha $ and such that $\lbrace p_{\alpha}^{2} \rbrace_{\alpha} $ is an approximate unit for $ \mathcal{A} .$ Suppose in addition that for all $\alpha $ there exists some $N_{\alpha} \in \mathbb{N} $ such that $\Phi^{n} (p_{\alpha}) \cdot p_{\alpha} = 0 $ for all $ n \geq N_{\alpha}$ (which gives that $ 0 =( \Phi^{n} (p_{\alpha}) \cdot p_{\alpha} )^{*}= p_{\alpha} \cdot  \Phi^{n} (p_{\alpha}) $ since $ \Phi $ is a $*$-isomorphism). Let $ b \in G(\mathcal{A}_{1}) $ and $T_{\Phi , b} $ be the operator on $ \mathcal{A}_{1} $ defined by $ T_{\Phi , b}(a) = b \cdot \Phi (a) $ for all $ a \in \mathcal{A}_{1}.$ Then $T_{\Phi , b} $ is a bounded linear operator on $ \mathcal{A}_{1} $ and since $ \mathcal{A} $ is an ideal in $ \mathcal{A}_{1} ,$ it follows that $T_{\Phi , b} ( \mathcal{A}) \subseteq \mathcal{A} $ because $\Phi ( \mathcal{A} ) = \mathcal{A} .$ The inverse of $T_{\Phi , b}, $ which we will denote by $ S_{\Phi , b},$ is given as $ S_{\Phi , b} (a) = \Phi^{-1} (b^{-1}) \cdot \Phi^{-1}(a)$ for all $a \in \mathcal{A}_{1} .$ Again, since $\Phi^{-1} ( \mathcal{A} ) = \mathcal{A} $ and $ \mathcal{A} $ is a two-sided ideal in $  \mathcal{A}_{1} ,$ we have that $ S_{\Phi , b} (\mathcal{A}) \subseteq \mathcal{A} ,$ hence $ T_{\Phi , b} (\mathcal{A}) = \mathcal{A} =  S_{\Phi , b} (\mathcal{A}) .$ 

	By some calculations one can check that $  T_{\Phi , b}^{n} (a) = b \cdot \Phi(b) \dots \Phi^{n-1}(b) \Phi^{n} (a)$ and $ S_{\Phi , b}^{n} (a) =  \Phi^{-1} (b^{-1}) \Phi^{-2} (b^{-1}) \dots \Phi^{-n} (b^{-1}) \cdot \Phi^{-n} (a)  $ for all $a \in \mathcal{A} .$

\begin{proposition} \label{ni3 p31}
	The following statements are equivalent.\\
	(i) $ T_{\Phi , b} $ is hypercyclic on $ \mathcal{A} .$\\
	(ii) For every $p_{\alpha} $ there exists a strictly increasing sequence $\lbrace n_{k} \rbrace_{k} \subseteq \mathbb{N} $ and sequences $\lbrace q_{k} \rbrace_{k} ,$ $ \lbrace d_{k} \rbrace_{k} $ in $  \mathcal{A} $ such that 
	$$\lim_{k \rightarrow \infty} \parallel q_{k} - p_{\alpha}^{2} \parallel=  \parallel d_{k} - p_{\alpha}^{2} \parallel =0 $$ 
	and 
	$$ \lim_{k \rightarrow \infty} \parallel   \Phi^{-n_{k}} (b)  \Phi^{-n_{k}+1} (b) \dots  \Phi^{-1} (b) q_{k} \parallel $$ 
	$$= \lim_{k \rightarrow \infty} \parallel   \Phi^{n_{k}-1} (b^{-1})  \Phi^{n_{k}-2} (b^{-1}) \dots  \Phi (b^{-1})  b^{-1}   d_{k} \parallel = 0$$
\end{proposition}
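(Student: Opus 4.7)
The plan is to prove the two implications separately, relying on Birkhoff's theorem (since $\mathcal{A}$ is separable and $T_{\Phi,b}$ is invertible on $\mathcal{A}$, hypercyclicity is equivalent to topological transitivity) and on the multiplicative identities
$$T_{\Phi,b}^{n}(ac) = T_{\Phi,b}^{n}(a)\,\Phi^{n}(c), \qquad S_{\Phi,b}^{n}(ac) = S_{\Phi,b}^{n}(a)\,\Phi^{-n}(c)$$
which are immediate from $\Phi$ being a $*$-homomorphism, together with the fact that $\Phi^{\pm n}$ is isometric. Observe also that the hypothesis $\Phi^{n}(p_{\alpha})\,p_{\alpha}=0$ for $n\geq N_{\alpha}$ yields, by taking adjoints (using $p_{\alpha}^{\ast}=p_{\alpha}$) and then applying the $*$-automorphism $\Phi^{-n}$, the full orthogonality $p_{\alpha}\,\Phi^{\pm n}(p_{\alpha}) = \Phi^{\pm n}(p_{\alpha})\,p_{\alpha}=0$ for $n\geq N_{\alpha}$.

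For the direction $(ii)\Rightarrow(i)$, given non-empty open $\mathcal{O}_{1},\mathcal{O}_{2}\subseteq\mathcal{A}$, pick $x\in\mathcal{O}_{1}$, $y\in\mathcal{O}_{2}$; since $\{p_{\alpha}^{2}\}$ is a (two-sided, self-adjoint) approximate unit, choose $\alpha$ with $\|p_{\alpha}^{2}x-x\|$ and $\|p_{\alpha}^{2}y-y\|$ both arbitrarily small. Applying (ii) with this $p_{\alpha}$ to obtain $(n_{k}), (q_{k}), (d_{k})$, I set
$$\eta_{k} := q_{k}x + S_{\Phi,b}^{n_{k}}(d_{k}y).$$
Using the identities above, $\|S_{\Phi,b}^{n_{k}}(d_{k}y)\|=\|S_{\Phi,b}^{n_{k}}(d_{k})\,\Phi^{-n_{k}}(y)\|\leq \|S_{\Phi,b}^{n_{k}}(d_{k})\|\,\|y\|\to 0$ and $q_{k}x\to p_{\alpha}^{2}x$, so $\eta_{k}\to p_{\alpha}^{2}x$; analogously $T_{\Phi,b}^{n_{k}}(\eta_{k})=T_{\Phi,b}^{n_{k}}(q_{k})\,\Phi^{n_{k}}(x)+d_{k}y\to p_{\alpha}^{2}y$. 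For $k$ large, $\eta_{k}\in\mathcal{O}_{1}$ and $T_{\Phi,b}^{n_{k}}(\eta_{k})\in\mathcal{O}_{2}$, proving topological transitivity.

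For $(i)\Rightarrow(ii)$, fix $p_{\alpha}$ and note that the set of valid transitivity indices for any fixed pair of non-empty open sets is unbounded (the tail of a hypercyclic orbit is still dense). Hence, for each $k\in\mathbb{N}$, I can pick $n_{k}>\max(n_{k-1},N_{\alpha})$ and some $q^{(k)}\in\mathcal{A}$ with $\|q^{(k)}-p_{\alpha}\|<1/k$ and $\|T_{\Phi,b}^{n_{k}}(q^{(k)})-p_{\alpha}\|<1/k$, by applying transitivity to the pair $(B(p_{\alpha},1/k), B(p_{\alpha},1/k))$. I then define
$$q_{k} := q^{(k)}\,p_{\alpha}, \qquad d_{k} := T_{\Phi,b}^{n_{k}}(q^{(k)})\,p_{\alpha}.$$
Since $\|p_{\alpha}\|\leq 1$, one has $\|q_{k}-p_{\alpha}^{2}\|, \|d_{k}-p_{\alpha}^{2}\|\leq 1/k$. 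Using the multiplicative identities,
$$T_{\Phi,b}^{n_{k}}(q_{k}) = T_{\Phi,b}^{n_{k}}(q^{(k)})\,\Phi^{n_{k}}(p_{\alpha}), \qquad S_{\Phi,b}^{n_{k}}(d_{k}) = q^{(k)}\,\Phi^{-n_{k}}(p_{\alpha}),$$
and now the orthogonality $p_{\alpha}\,\Phi^{\pm n_{k}}(p_{\alpha})=0$ (since $n_{k}\geq N_{\alpha}$) lets me replace $T_{\Phi,b}^{n_{k}}(q^{(k)})$ and $q^{(k)}$ by $p_{\alpha}$ at the cost of $1/k$, forcing both norms to be at most $1/k$.

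The crux is the forward implication: a priori, topological transitivity only produces witnesses in one direction at a time, whereas (ii) demands a single sequence $(n_{k})$ governing both the $T$-decay of $q_{k}$ and the $S$-decay of $d_{k}$ (this is the content of the Hypercyclicity Criterion, which is strictly stronger than hypercyclicity in general). The orthogonality relations $p_{\alpha}\,\Phi^{\pm n}(p_{\alpha})=0$ for $n\geq N_{\alpha}$ are precisely the structural feature that allows a single transitivity witness $(q^{(k)}, n_{k})$ for the pair $(B(p_{\alpha},1/k), B(p_{\alpha},1/k))$ to be turned, via right multiplication by $p_{\alpha}$, into simultaneous forward and backward approximations of $p_{\alpha}^{2}$ by $0$.
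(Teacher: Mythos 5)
Your proof is correct and follows essentially the same route as the paper's: for $(i)\Rightarrow(ii)$ you take a transitivity witness $q^{(k)}$ near $p_{\alpha}$, set $q_{k}=q^{(k)}p_{\alpha}$ and $d_{k}=T_{\Phi,b}^{n_{k}}(q^{(k)})p_{\alpha}$, and exploit the orthogonality $\Phi^{\pm n}(p_{\alpha})p_{\alpha}=0$, exactly as the paper does with $a_{1}p_{\alpha}$; for $(ii)\Rightarrow(i)$ you use the same perturbation $q_{k}x+S_{\Phi,b}^{n_{k}}(d_{k}y)$. The only differences are cosmetic: you package the estimates via the covariance identities $T_{\Phi,b}^{n}(ac)=T_{\Phi,b}^{n}(a)\Phi^{n}(c)$ and handle the approximate-unit step slightly more carefully (approximating $x$ by $p_{\alpha}^{2}x$ rather than asserting $x=p_{\alpha}^{2}x$).
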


\begin{proof}
	We prove first $ i) \Rightarrow ii) . $\\
	Let $p_{\alpha} $ be given. Since $ T_{\Phi , b} $ is hypercyclic, there exists some $ n_{1} \geq N_{\alpha} $ and some $a_{1} \in \mathcal{A} $ such that $\parallel a_{1} - p_{\alpha} \parallel < \dfrac{1}{4} $ and $ \parallel  b \cdot \Phi(b) \dots \Phi^{n_{1}-1}(b) \Phi^{n_{1}} (a_{1}) - p_{\alpha} \parallel < \dfrac{1}{4} .$ Since $ 0 = p_{\alpha}  \Phi^{n_{1}}(p_{\alpha}) = \Phi^{n_{1}} (p_{\alpha}) \cdot p_{\alpha}  ,$ we get 
	
	\begin{align*}
	\parallel \Phi^{n_{1}}(a_{1} ) \cdot p_{\alpha} \parallel  
	& = \parallel ( \Phi^{n_{1}}(a_{1} ) -  \Phi^{n_{1}}(p_{\alpha} ) ) \cdot p_{\alpha} \parallel \\
	& \leq \parallel   \Phi^{n_{1}}(a_{1} - p_{\alpha} ) \parallel\\
	& = \parallel  a_{1} - p_{\alpha} \parallel \leq \dfrac{1}{4}, \text{ so }  
	\end{align*}

	\begin{equation}\label{d1 f1}
	\parallel \Phi^{n_{1}}(a_{1} ) \cdot p_{\alpha} \parallel  \leq 
	\parallel  a_{1} - p_{\alpha} \parallel \leq \dfrac{1}{4} 
	\end{equation}
	Moreover, 
	\begin{equation}\label{d1 f2}
	\parallel  (a_{1} - p_{\alpha}) p_{\alpha}   \parallel \leq \parallel a_{1} - p_{\alpha} \parallel \leq \dfrac{1}{4}.
	\end{equation} 
	Similarly, $0=  \Phi^{-n_{1}}(p_{\alpha}) \cdot p_{\alpha} = p_{\alpha} \cdot  \Phi^{-n_{1}}(p_{\alpha}),$ so we get

	$$\parallel  \Phi^{-n_{1}}(b)  \Phi^{-n_{1}+1}(b)  \dots \Phi^{-1}(b) a_{1}p_{\alpha} \parallel$$
	$$= \parallel \Phi^{-n_{1}}(b \Phi (b)  \dots \Phi^{n_{1}-1} (b) \Phi^{n_{1}}(a_{1}) - p_{\alpha} ) p_{\alpha} \parallel $$
	$$ \leq 	\parallel  \Phi^{-n_{1}}(b \Phi (b)  \dots \Phi^{n_{1}-1} (b) \Phi^{n_{1}}(a_{1}) - p_{\alpha} ) \parallel$$
	$$= \parallel b \Phi (b)  \dots \Phi^{n_{1}-1} (b) \Phi^{n_{1}}(a_{1}) - p_{\alpha} \parallel 
	\leq \dfrac{1}{4} , \text{ so } $$

	\begin{equation}\label{d1 f3}
	\parallel  \Phi^{-n_{1}}(b)  \Phi^{-n_{1}+1}(b)  \dots \Phi^{-1}(b) a_{1}p_{\alpha} \parallel \leq \dfrac{1}{4}.
	\end{equation} 
	Finally, we have

	$$\parallel b \Phi(b) \dots \Phi^{n_{1}-1} (b) \Phi^{n_{1}}(a_{1}) p_{\alpha}   - p_{\alpha}^{2} \parallel $$
	$$= \parallel ( b \Phi(b) \dots \Phi^{n_{1}-1} (b) \Phi^{n_{1}}(a_{1}) - p_{\alpha}) \cdot p_{\alpha} \parallel $$
	$$ \leq \parallel b \Phi(b) \dots \Phi^{n_{1}-1} (b) \Phi^{n_{1}}(a_{1}) - p_{\alpha}  \parallel  \leq \dfrac{1}{4} , \text{ so } $$
	
	\begin{equation}\label{d1 f4}
	\parallel b \Phi(b) \dots \Phi^{n_{1}-1} (b) \Phi^{n_{1}}(a_{1}) p_{\alpha}   - p_{\alpha}^{2} \parallel \leq \dfrac{1}{4}.
	\end{equation}
	By (\ref{d1 f1}) we also get that 
	$$
	\parallel  \Phi^{n_{1}-1}(b^{-1})  \Phi^{n_{1}-2}(b^{-1})  \dots \Phi(b^{-1}) b^{-1}  b \Phi(b) \dots  
	\Phi^{n_{1}-1}(b)  \Phi^{n_{1}}(a_{1})p_{\alpha} \parallel $$
	$$ = \parallel \Phi^{n_{1}}(a_{1}) p_{\alpha}  \parallel
	\leq \dfrac{1}{4}
	$$ 
	Put $q_{1}=a_{1}p_{\alpha} $ and $d_{1}=b \Phi(b) \dots \Phi^{n_{1}-1}(b) \Phi^{n_{1}}(a_{1}) p_{\alpha}.$ Then $ \parallel q_{1}- p_{\alpha}^{2} \parallel < \dfrac{1}{4} ,$ 
	$ \parallel d_{1}- p_{\alpha}^{2} \parallel < \dfrac{1}{4} ,$ $\parallel  \Phi^{-n_{1}}(b)  \Phi^{-n_{1}+1}(b)  \dots \Phi^{-1}(b) q_{1} \parallel 
	\leq \dfrac{1}{4}$  and
	$$\parallel  \Phi^{n_{1}-1}(b^{-1})  \Phi^{n_{1}-2}(b^{-1})  \dots \Phi^{-1}(b^{-1})b^{-1} d_{1} \parallel \leq \dfrac{1}{4} .$$ 
	Next, since $T_{\Phi,b} $ is hypercyclic, we can find a hypercyclic vector $a_{2} $ and some $n_{2} > n_{1} $ such that $\parallel a_{2} - p_{\alpha} \parallel < \dfrac{1}{4^{2}}$ and
	$\parallel T_{\Phi,b}^{n_{2}} (a_{2}) - p_{\alpha} \parallel < \dfrac{1}{4^{2}} $ and continue as above to find $q_{2} $ and $d_{2} $ in $\mathcal{A} $ such that $\parallel q_{2} - p_{\alpha}^{2} \parallel < \dfrac{1}{4^{2}} ,$
	$\parallel d_{2} - p_{\alpha}^{2} \parallel < \dfrac{1}{4^{2}} $
	and \\
	$$\parallel  \Phi^{-n_{2}}(b)  \Phi^{-n_{2}+1}(b)  \dots \Phi^{-1}(b) q_{2} \parallel \leq \dfrac{1}{4^{2}},$$
	$$\parallel  \Phi^{n_{2}-1}(b^{-1})  \Phi^{n_{2}-2}(b^{-1})  \dots \Phi (b^{-1}) d_{2} \parallel \leq \dfrac{1}{4} .$$ 
	Proceeding inductively, we can construct the sequences $\lbrace n_{k} \rbrace_{k} , \lbrace q_{k} \rbrace_{k} $ and $  \lbrace d_{k} \rbrace_{k} $ with the properties in $ ii)$, so $i) \Rightarrow ii).$\\
	Now we prove the opposite implication.\\
	Let $\mathcal{O}_{1} $ and $ \mathcal{O}_{2}$ be two open non-empty subsets of $\mathcal{A} .$ Since $\lbrace p_{\alpha}^{2} \rbrace $ is an approximate unit in $\mathcal{A},$ we can find some $x \in \mathcal{O}_{1},$ $y \in \mathcal{O}_{2} $ such that $x=p_{\alpha}^{2}x $ and $y=p_{\alpha}^{2}y $ for so sufficiently large $\alpha .$ Choose the sequences $\lbrace n_{k} \rbrace_{k}, \lbrace q_{k} \rbrace_{k}, \lbrace d_{k} \rbrace_{k} $ satisfying the conditions of $(ii) $ with respect to $p_{\alpha}.$ For each $k \in \mathbb{N} ,$ set $x_{k}= q_{k}x + S_{\Phi,b}^{n_{k}} (d_{k}y).$ 

	We have that 
	\begin{align*}
	\parallel S_{\Phi,b}^{n_{k}} (d_{k}y) \parallel 
	&	= \parallel  \Phi^{-1}(b^{-1}) \dots  \Phi^{-n_{k}}(b^{-1})   \Phi^{-n_{k}} ( d_{k} y) \parallel    \\
	&   = \parallel \Phi^{n_{k}} (   \Phi^{-1}(b^{-1}) \dots  \Phi^{-n_{k}}(b^{-1})   \Phi^{-n_{k}} ( d_{k} y)   ) \parallel \\
	&   = \parallel  \Phi^{n_{k}-1} (b^{-1}) \dots  \Phi (b^{-1}) \cdot b^{-1} d_{k}y \parallel \\
	&   \leq \parallel  \Phi^{n_{k}-1} (b^{-1}) \dots  \Phi^{-1} (b^{-1})b^{-1} d_{k} \parallel \parallel y \parallel \rightarrow 0 \text{ as } k \rightarrow \infty. \\ 
	\end{align*}
	
	Similarly, 
	\begin{align*}
	\parallel T_{\Phi,b}^{n_{k}} (q_{k}y) \parallel 
	&	= \parallel  b \Phi(b) \dots  \Phi^{n_{k}-1}(b)   \Phi^{n_{k}} (q_{k} y) \parallel    \\
	&   = \parallel \Phi^{-n_{k}} (  b \Phi(b) \dots  \Phi^{n_{k}-1}(b)   \Phi^{n_{k}} ( q_{k} y)   ) \parallel \\
	&   \leq \parallel  \Phi^{-n_{k}} (b) \Phi^{-n_{k}+1}(b) \dots  \Phi^{-1} (b) q_{k} \parallel \parallel y \parallel \rightarrow 0 \text{ as } k \rightarrow \infty.  
	\end{align*}
	It follows that $x_{k} \rightarrow x $ and $T_{\Phi,b}^{n_{k}} (x_{k}) \rightarrow y ,$ as  $ k \rightarrow \infty , $ so $T_{\Phi,b}$ is topologically transitive, thus hypercyclic on $\mathcal{A} .$ 
\end{proof}

\begin{remark}
	We notice that the assumption that for all $\alpha$ there exists some $N_{\alpha}$ such that 
	$\Phi^{n}(p_{\alpha}) p_{\alpha} = 0$ for all $n \geq N_{\alpha}$ is in fact not needed for the proof of the implication $(ii)$ implies $(i)$ in Proposition \ref{ni3 p31}.
\end{remark}

\begin{example} \label{Ni e4.2}
	Let $H$ be a separable Hilbert space and $U$ be a unitary operator on $H$ satisfying the condition (2) from \cite{IT} with respect to an orthonormal basis $ \lbrace e_{j} \rbrace_{j \in \mathbb{Z}} .$	Set $ \Phi$ to be the $*$-isomorphism on $B(H)$ given by $\Phi (F)=U^{*}FU .$ Then, by the condition (2) from \cite{IT}, given $m \in \mathbb{N} $ there exists an $ N_{m} \in \mathbb{N}$ such that $P_{m}U^{n}P_{m}=0 $ for $n \geq N_{m} $ (where $P_{m} $ is the orthogonal projection onto $Span \lbrace e_{-m}, \dots , e_{m} \rbrace $ as in \cite{IT}.) Moreover, $\lbrace P_m \rbrace_{m \in \mathbb{N}}$ is an approximate unit for $B_{0}(H)$ by \cite[Proposition 2.2.1]{MT}. Hence, for all $ n \geq N_{m}$ we have $\Phi^{n}(P_{m})P_{m}=U^{*n}P_{m}U^{n}P_{m}=0.$ Here $\mathcal{A}_{1}=B(H) $ and $\mathcal{A}=B_{0}(H) .$ By some calculations we see that the conditions in part $(ii)$ in  Proposition \ref{ni3 p31}  are the same as the conditions (3) and (4) in \cite{IT}. The operator $T_{U,W} $ from \cite{IT} is actually the operator $T_{\Phi,WU} $ ( because $WFU=WU(U^{*}FU) $ for all $ F \in B_{0}(H) \text{ }).$ For concrete examples satisfying these conditions we refer to examples from \cite{IT} and \cite{IT2}. In fact, in \cite{IT2} it has been proved that these conditions are equivalent to the condition that the operator $W$ satisfies hypercyclicity criterion on $H.$ For more details about this criterion, see \cite{bp07}.

\end{example}

\begin{example}
	Let $H=L^{2}(\mathbb{R}) .$  For each  $j,k,m \in \mathbb{N} $ we let  $D_{j}^{(k)},P_{m} $  be the operators on  $H $ as in Example \ref{Ni e3.6} and for each  $J \in \mathbb{N} $ we let  $ \tilde{P}_{J,m}$  be the orthogonal projection on  $ l_{2}(B_{0}(H))$  induced by  $P_{m} $ and  $[J] ,$ as defined on page 10 in Section \ref{Section 3}. Let  $\mathcal{K} (l_{2}(B_{0}(H))) $ denote the  $C^{*}$-algebra of compact operators on  $l_{2}(B_{0}(H)) $  in the sense of \cite[Section 2.2]{MT}. Then it is not hard to see that  $\lbrace \tilde{P}_{J,m} \rbrace_{J.m \in \mathbb{N}} $  is an approximate unit for  $\mathcal{K} (l_{2}(B_{0}(H))) .$ For  $j \in \mathbb{Z} $  we let  $W_{j} $  be the operator on  $H $ from Example \ref{Rni3 e2.3}. Let  $T_{U,W} $ be the operator on  $l_{2}(B_{0}(H)) $  defined in Section \ref{Section 3}, where  $W=\lbrace W_{j} \rbrace_{j \in \mathbb{Z}}.$ If  $U=I,$ then  $T_{I,W} $  is a bounded, adjointable operator on  $l_{2}(B_{0}(H)) $ which is linear with respect to the  $C^{*} $-algebra  $B_{0}(H) .$ (Recall that we consider  $l_{2}(B_{0}(H)) $ as the right Hilbert  $C^{*} $-module). For each  $k \in \mathbb{N} ,$ set  $\tilde{D}_{k} $ to be the operator on  $l_{2}(B_{0}(H))  $ given by  $\tilde{D}_{k} (\lbrace x_{j} \rbrace_{j \in \mathbb{Z}}) = \lbrace D_{j}^{(k)} x_{j} \rbrace_{j \in \mathbb{Z}} $ for all  $\lbrace  x_{j} \rbrace_{j \in \mathbb{Z}} \in l_{2}(B_{0}(H))  $. Since  $\parallel D_{j}^{(k)} \parallel \leq 1  $ for all  $j \in \mathbb{Z} $  and  $ k \in \mathbb{N},$ we have that  $\tilde{D}_{k} $ is a bounded  $B_{0}(H) $-linear, adjointable operator on $l_{2}(B_{0}(H))$  for all  $ k \in \mathbb{N}.$ By the similar arguments as in Example \ref{Ni e3.6} we can deduce that  
	$$\lim_{k \rightarrow \infty}\parallel \tilde{D}_k \tilde{P}_{J,m} -\tilde{P}_{J,m} \parallel =0$$
	for all  $J,m \in \mathbb{N} .$ Moreover, for all  $ k,J,m \in \mathbb{N} $ we have that  
	$$\lim_{n \rightarrow \infty}\parallel T_{I,W}^{n} \tilde{D}_{k}P_{J,m} \parallel =
	\lim_{n \rightarrow \infty}\parallel T_{I,W}^{-n} \tilde{D}_{k} \tilde{P}_{J,m} \parallel = 0 $$ 
	
	Hence, for all  $J,m \in \mathbb{N} $ we can construct a strictly increasing sequence  $ \lbrace n_{k} \rbrace_{k} \subseteq \mathbb{N}$  such that  
	$$0=\lim_{k \rightarrow \infty}\parallel T_{I,W}^{n_{k}} \tilde{D}_{k} \tilde{P}_{J,m} \parallel = 
	\lim_{k \rightarrow \infty}\parallel T_{I,W}^{-n_{k}} \tilde{D}_{k} \tilde{P}_{J,m} \parallel = 0   $$ 
	Let now $ \mathcal{A} = \mathcal{K}(l_{2}(B_{0}(H))) $ and  $\mathcal{A}_{1} $ be the  $C^{*}$-algebra of all bounded  $B_{0}(H) $-linear, adjointable operators on  
	$l_{2}(B_{0}(H)) .$ If  $\tilde{U} $ is a unitary operator on  $l_{2}(B_{0}(H)) ,$ we let   $\Phi $ be the  $* $-isomorphism on  $\mathcal{A}_{1} $  given by  $\Phi (F) = \tilde{U}^{*}F\tilde{U}$  for all  $F \in \mathcal{A}_{1} .$ Put then  $ b= T_{I,W}\tilde{U} \in  G(\mathcal{A}_{1}).$ By the same arguments as in Example \ref{Ni e4.2} we can deduce that the conditions of Proposition \ref{ni3 p31} are satisfied in this case.         
\end{example}

\begin{example}
	Let $X$ be a locally compact Hausdorff space,  $\mathcal{A} = C_{0}(X) ,$ $\mathcal{A}_{1} = C_{b}(X) $ and $\Phi$ be given by $\Phi (f) = f \circ \alpha $ for all $ f \in  C_{b}(X) $ where $\alpha $ is a homeomorphism of $X.$ Put 
	$$ S= \lbrace f \in C_{c}(X) \text{ } \vert \text{ } 0 \leq f \leq 1 \text{ and }  f_{\mid_{K}}=1 \text{ for some compact } K \subset X \rbrace .$$ 
	If $\tilde{S} = \lbrace f^{2} \text{ } \vert \text{ } f \in S  \rbrace ,$ then $\tilde{S}$  is an approximate unit for $ C_{0}(X) .$ Suppose that $\alpha$ is \emph{aperiodic}, that is for each compact subset $K$ of $X$, there exists a constant $N>0$ such that for each $n\geq N$, we have $K \cap \alpha^{n}(K)=\varnothing$. By some calculations it is not hard to see that in this case the conditions in Proposition \ref{ni3 p31} are equivalent to the condition that for every compact subset $K$ of $\Omega$ there exists a strictly increasing sequence  $\lbrace n_{k} \rbrace_{k} \subseteq \mathbb{N} ,$ such that 
	$$0=\lim_{k \rightarrow \infty} ( \sup_{t \in K}  \prod_{j=0}^{n_{k}-1} (b \circ \alpha^{j-n_{k}}) (t)  ) = \lim_{k \rightarrow \infty} ( \sup_{t \in K}  \prod_{j=0}^{n_{k}-1} (b \circ \alpha^{j})^{-1} (t)  ) ,$$ 
	For the concrete examples satisfying these conditions, we refer to examples in \cite{si09}.
\end{example}

If $a \in \mathcal{A}_{1},$ in the sequel we shall denote by $L_{a}$ the left multiplier by $a.$

\begin{corollary}
	If there exist dense subsets $\Omega_{1} $ and $\Omega_{2} $ of $\mathcal{A} $ and a strictly increasing sequence $\lbrace n_{k} \rbrace_{k} \subseteq \mathbb{N}  $ such that 
	$$ L_{\Phi^{-n_{k}} (b) \Phi^{-n_{k}+1}(b) \dots  \Phi^{-1} (b)} \stackrel{ k \rightarrow \infty }{\longrightarrow} 0   $$ 
	pointwise on $\Omega_{1} $ and 
	$$ L_{\Phi^{n_{k}-1} (b^{-1}) \Phi^{n_{k}-2} (b^{-1}) \dots  \Phi (b^{-1})  b^{-1} } \stackrel{ k \rightarrow \infty }{\longrightarrow} 0   $$ 
	pointwise on $\Omega_{2} ,$ then $T_{\Phi,b} $ is hypercyclic on $\mathcal{A} .$ 
\end{corollary}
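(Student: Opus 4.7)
The plan is to reduce the corollary to Proposition \ref{ni3 p31} by verifying condition (ii) for the operator $T_{\Phi,b}$, using density of $\Omega_1$, $\Omega_2$ together with a diagonal extraction from the sequence $\{n_k\}_k$ furnished by the hypothesis. Fix an approximate unit element $p_{\alpha}$; the target is to produce a strictly increasing sequence $\{m_k\}_k \subseteq \mathbb{N}$ and sequences $\{q_k\}_k, \{d_k\}_k \subseteq \mathcal{A}$ with $q_k, d_k \to p_{\alpha}^2$ and with the two norm limits in (ii) being zero along $m_k$.

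The construction proceeds step by step. For each $k \in \mathbb{N}$, density of $\Omega_1$ (resp.\ $\Omega_2$) in $\mathcal{A}$ gives an element $q_k \in \Omega_1$ (resp.\ $d_k \in \Omega_2$) with $\|q_k - p_{\alpha}^2\| < 1/k$ and $\|d_k - p_{\alpha}^2\| < 1/k$. By the hypothesis on $\Omega_1$, applied to the fixed element $q_k$, there exists an index $N_k$ such that
$$\|\Phi^{-n_j}(b)\Phi^{-n_j+1}(b)\cdots \Phi^{-1}(b)\, q_k\| < \tfrac{1}{k}$$
for all $j \geq N_k$; similarly the hypothesis on $\Omega_2$ applied to $d_k$ yields an index $M_k$ such that
$$\|\Phi^{n_j-1}(b^{-1})\cdots \Phi(b^{-1}) b^{-1}\, d_k\| < \tfrac{1}{k}$$
for all $j \geq M_k$. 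Choose $j_k$ with $j_k > j_{k-1}$ and $j_k \geq \max(N_k, M_k)$, and set $m_k := n_{j_k}$. Then $\{m_k\}_k$ is strictly increasing because it is a subsequence of the strictly increasing sequence $\{n_k\}_k$, and by construction the sequences $\{q_k\}_k$ and $\{d_k\}_k$ satisfy all four norm estimates of Proposition \ref{ni3 p31}(ii) with the rate $1/k$. Invoking Proposition \ref{ni3 p31} then yields hypercyclicity of $T_{\Phi,b}$ on $\mathcal{A}$.

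The main subtle point to verify is simply that the diagonal extraction is compatible: the hypothesis gives pointwise convergence on individual vectors of $\Omega_i$, and we must combine this with the fact that the approximants $q_k$, $d_k$ themselves vary with $k$. This is handled cleanly because the $\Omega_i$-convergence is pointwise along the full sequence $\{n_k\}_k$, so we are free to pick $j_k$ as large as needed for each individually chosen $q_k$ and $d_k$, thereby ensuring both norms drop below $1/k$ simultaneously. No obstacle arises from the ideal structure of $\mathcal{A}$ in $\mathcal{A}_1$ since $\Omega_1, \Omega_2 \subseteq \mathcal{A}$ already and the left multipliers by elements of $\mathcal{A}_1$ preserve $\mathcal{A}$. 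Thus the corollary follows as a direct specialization of Proposition \ref{ni3 p31}.
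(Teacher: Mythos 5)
Your proposal is correct and follows essentially the same route as the paper: pick $q_k\in\Omega_1$, $d_k\in\Omega_2$ within $1/k$ (the paper uses $1/4^k$) of $p_\alpha^2$ by density, then use the pointwise convergence of the left multipliers to extract a strictly increasing subsequence of $\{n_k\}_k$ along which all the norm conditions of Proposition \ref{ni3 p31}(ii) hold, and conclude by the implication $(ii)\Rightarrow(i)$ of that proposition. The diagonal-extraction point you flag is exactly the inductive construction the paper carries out.
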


\begin{proof}
	Let $p_{\alpha} $ be given. Since $\Omega_{1} $ and $\Omega_{2} $ are dense in $\mathcal{A} ,$ there exist some $q_{1} \in \Omega_{1} $ and $d_{1} \in \Omega_{2}  $ such that 
	$$\parallel q_{1} - p_{\alpha}^{2} \parallel < \dfrac{1}{4} \text{ and } \parallel d_{1} - p_{\alpha}^{2} \parallel < \dfrac{1}{4}  .$$  
	By the assumption we can find some $n_{k_{1}} $ such that 
	$$\parallel \Phi^{-n_{k}} (b) \Phi^{-n_{k}+1}(b) \dots  \Phi^{-1} (b) q_{1} \parallel < \dfrac{1}{4}  $$ 
	and 
	$$\parallel \Phi^{n_{k}-1} (b^{-1}) \Phi^{n_{k}-2} (b^{-1}) \dots  \Phi (b^{-1})  b^{-1}  d_{1} \parallel < \dfrac{1}{4} $$ 
	for all $k \geq k_{1} .$ Then we find some $q_{2} \in \Omega_{1}, d_{2} \in \Omega_{2}  $ such that 
	$$\parallel q_{2} - p_{\alpha}^{2} \parallel < \dfrac{1}{4^{2}} \text{ and } \parallel d_{2} - p_{\alpha}^{2} \parallel < \dfrac{1}{4^{2}}  .$$   
	By the assumption, we can find some $k_{2} \geq k_{1} $ such that
	$$\parallel \Phi^{-n_{k}} (b) \Phi^{-n_{k}+1}(b) \dots  \Phi^{-1} (b) q_{2} \parallel < \dfrac{1}{4^{2}}  $$ 
	and 
	$$\parallel \Phi^{n_{k}-1} (b^{-1}) \Phi^{n_{k}-2} (b^{-1}) \dots  \Phi (b^{-1})  b^{-1}  d_{2} \parallel < \dfrac{1}{4^{2}} ,$$ 
	for all $k \geq k_{2} .$ Proceeding inductively, we can construct the strictly increasing sequence $ \lbrace n_{k_{i}} \rbrace_{i} $ and the sequences  $ \lbrace q_{i} \rbrace_{i} $  in  $ \lbrace d_{i} \rbrace_{i} $ in $\mathcal{A} $  satisfying the conditions of Proposition \ref{ni3 p31}.  
\end{proof}

\textbf{Acknowledgement}

I am grateful to Professor Dragan S.  \DJ{}or\dj{}evi\'{c} for suggesting to me Hilbert $C^{*}$-modules as the main field of my research and for introducing to me the relevant literature. Also, I am grateful to Professor S.M. Tabatabaie for suggesting to me linear dynamics of operators as the topic of my research and for introducing to me the relevant literature.

\vspace{.1in}
\end{document}